\newtheorem{theorem}{Theorem}[section]
\newtheorem{lemma}{Lemma}[section]
\newtheorem{corollary}{Corollary}[section]
\newtheorem{remark}{Remark}[section]
\begin{document}

\begin{frontmatter}
\title{Sustainable Water Treatment through Fractional-Order Chemostat Modeling with Sliding Memory and Periodic Boundary Conditions: A Mathematical Framework for Clean Water and Sanitation}

\author[dept,ndrc]{Kareem T. Elgindy\corref{cor1}}
\ead{k.elgindy@ajman.ac.ae}
\address[dept]{Department of Mathematics and Sciences, College of Humanities and Sciences, Ajman University, P.O. Box 346, Ajman, United Arab Emirates}
\address[ndrc]{Nonlinear Dynamics Research Center (NDRC), Ajman University, P.O. Box 346, Ajman, United Arab Emirates}
\cortext[cor1]{Corresponding author}

\begin{abstract}
This work develops and analyzes a novel fractional-order chemostat system (FOCS) with a Caputo fractional derivative (CFD) featuring a sliding memory window and periodic boundary conditions (PBCs), designed to model microbial pollutant degradation in sustainable water treatment. By incorporating the Caputo fractional derivative with sliding memory (CFDS), the model captures time-dependent behaviors and memory effects in biological systems more realistically than classical integer-order formulations. We reduce the two-dimensional fractional differential equations (FDEs) governing substrate and biomass concentrations to a one-dimensional FDE by utilizing the PBCs. The existence and uniqueness of non-trivial, periodic solutions are established using the Carath\'{e}odory framework and fixed-point theorems, ensuring the system's well-posedness. We prove the positivity and boundedness of solutions, demonstrating that substrate concentrations remain within physically meaningful bounds and biomass concentrations stay strictly positive, with solution trajectories confined to a biologically feasible invariant set. Additionally, we analyze non-trivial equilibria under constant dilution rates and derive their stability properties. The rigorous mathematical results confirm the viability of FOCS models for representing memory-driven, periodic bioprocesses, offering a foundation for advanced water treatment strategies that align with Sustainable Development Goal 6 (Clean Water and Sanitation).
\end{abstract}
\begin{keyword}
Caputo fractional derivative \sep Carath\'{e}odory solution \sep Chemostat \sep Existence and uniqueness \sep Fourier-Gegenbauer \sep Periodic boundary conditions \sep Well-posedness.\\[0.5em]
\textit{MSC:} 34A12, 34K13, 34K37, 47H10, 92D25.
\end{keyword}
\end{frontmatter}

\begin{longtable}{p{2.5cm} p{10cm}}
\caption{List of Acronyms} \\
\toprule
\textbf{Acronym} & \textbf{Definition} \\
\midrule
\endfirsthead
\toprule
\textbf{Acronym} & \textbf{Definition} \\
\midrule
\endhead
\bottomrule
\endfoot
a.e. & Almost Everywhere \\
CFD & Caputo Fractional Derivative \\
CFDS & Caputo Fractional Derivative with Sliding Memory \\
FD & Fractional Derivative \\
FDE & Fractional Differential Equation \\
FG-PS & Fourier-Gegenbauer pseudospectral \\
FOCM & Fractional-Order Chemostat Model \\
FOCS & Fractional-Order Chemostat System \\
PBC & Periodic Boundary Condition \\
PFOCP & Periodic Fractional Optimal Control Problem \\
SDG & Sustainable Development Goal \\
\end{longtable}

\section*{Introduction}
The chemostat, a bioreactor designed for the continuous cultivation of microorganisms, serves as a fundamental tool in sustainable water treatment applications such as pollutant degradation, where controlled microbial growth is critical. Classical chemostat models typically assume steady-state conditions, which simplify analysis but often fail to account for the dynamic and adaptive behaviors of microbial populations. Recent research has shown that periodic operation of the chemostat can improve performance by aligning with the time-varying nature of biological processes \citep{bayen2020improvement, elgindy2023new}. In particular, Elgindy \citep{elgindy2023new} demonstrated that optimal periodic control policies can significantly improve chemostat performance compared to steady-state operation, achieving up to 57\% reduction in time-averaged substrate concentration under certain conditions. This work attempts to bridge the gap between advanced mathematical modeling and practical water treatment challenges by introducing a fractional-order framework with sliding memory and PBCs. The findings build upon decades of theoretical development in chemostat dynamics, highlighting the potential for fractional calculus to revolutionize sustainable water treatment systems. 

The mathematical modeling of chemostat dynamics has evolved considerably since the pioneering work of Douglas and Rippin \citep{douglas1966unsteady}, who first observed that unsteady-state operation could outperform steady-state conditions in biochemical reactors. This phenomenon, known as ``overyielding,'' occurs when periodic variations in input parameters lead to improved system performance. Bayen et al. \citep{bayen2020improvement} further established that for chemostats exhibiting overyielding states, the optimal periodic control follows a bang-bang\footnote{A bang–bang function is a two-state, piecewise constant function that switches abruptly between two states \cite{elgindy2023new}.} strategy with even switching times. These findings highlight the importance of developing accurate mathematical frameworks that can capture the complex dynamics of periodically operated bioreactors. To address the limitations of classical integer-order models in capturing such complex behaviors, researchers have increasingly turned to fractional calculus as a more sophisticated mathematical approach.

Fractional calculus generalizes differentiation and integration to non-integer orders, providing a robust mathematical framework for modeling systems with long-term memory and non-local effects. This offers a significant improvement over classical integer-order models, enabling the incorporation of memory and hereditary effects in biological systems. The CFD, widely used in physical and biological systems, is particularly effective due to its compatibility with physically interpretable initial conditions. For a function $f(t)$ and fractional order $\alpha \in (0,1)$, the CFD is defined as:
\[
D^\alpha f(t) = \frac{1}{\Gamma(1-\alpha)} \int_0^t (t - \tau)^{-\alpha} f'(\tau) \, d\tau,
\]
where $\Gamma$ denotes the Gamma function. This integral formulation captures the system's historical evolution, weighting past states according to a power-law kernel. In the chemostat, this derivative models microbial growth and substrate dynamics that depend on both current and past states, such as the cumulative effects of nutrient availability or microbial adaptation. However, while the standard CFD offers theoretical advantages, its practical implementation in periodic systems presents computational challenges that require innovative solutions.

To improve the practicality of this approach, we introduce a fixed sliding memory effect into the Caputo framework, as proposed by \citet{bourafa2021periodic}, and modified later by \citet{elgindy2024fourierA,elgindy2024fourierB,elgindy2024fourierC}, primarily to address and improve its computational efficiency and accuracy, particularly when dealing with PFOCPs and fractional partial differential equations with periodic solutions. Unlike the standard CFD, which integrates over all past states from the initial time $t=0$, the fixed sliding memory approach restricts the memory window to a finite, time-dependent interval $[t - L, t]$, where $L > 0$ specifies the memory length. The CFDS is thus given by:
\begin{equation}\label{eq:caputo}
{}^{\text{MC}}_{L}D_t^\alpha f(t) = \frac{1}{\Gamma(1-\alpha)} \int_{t-L}^t (t - \tau)^{-\alpha} f'(\tau) \, d\tau.
\end{equation}
This modification offers several key advantages, particularly for modeling PFOCPs in chemostat systems. First, it preserves the periodicity of the FD for $T$-periodic functions, addressing a critical limitation of classical Gr\"{u}nwald-Letnikov, Riemann-Liouville, and CFDs, which do not maintain periodicity for non-constant, periodic functions \citep{elgindy2024fourierA}. This property is essential for accurately modeling chemostat dynamics under periodic operation, where microbial responses to cyclic nutrient inputs or environmental changes must remain consistent over each period. Second, the fixed sliding memory window improves computational efficiency by limiting the integration range to a finite interval, reducing the computational burden compared to evaluating integrals over the entire history from $t=0$. This is particularly beneficial for numerical simulations of complex bioreactor systems, where computational resources are often a limiting factor. Third, the sliding memory approach emphasizes recent dynamics, reflecting the biological reality that recent environmental conditions, such as substrate concentrations, typically have a greater impact on microbial behavior than distant past states. This makes the model more biologically relevant, as microbial populations in chemostats often exhibit finite memory effects, responding primarily to recent nutrient availability or environmental shifts. Finally, as demonstrated by Elgindy \citep{elgindy2024fourierA,elgindy2024fourierB}, the CFDS enables the transformation of PFOCPs into constrained NLPs, which can be efficiently solved using standard nonlinear programming problem solvers, facilitating practical implementation in chemostat control strategies. The theoretical foundation established by these fractional calculus advancements has inspired numerous researchers to explore their application in specific chemostat modeling scenarios.

Building on the advancements in fractional calculus, recent studies have further explored the application of FDs to chemostat modeling, improving the understanding of microbial dynamics under complex conditions. In their 2017 studies, \citet{zeinadini2017approximation,zeinadini2017numerical} extended the three-dimensional chemostat model by incorporating FDs and variable yield coefficients. The resulting nonlinear fractional-order systems were described by sets of FDEs utilizing the Gr\"{u}nwald-Letnikov FD, with state variables representing the nutrient concentration and the concentrations of two competing microorganisms. The growth kinetics of the microorganisms were modeled using the Monod model, also known as Michaelis-Menten dynamics, and the yield coefficients were expressed as functions of nutrient concentration involving terms of various powers. These studies primarily focused on the stability analysis and numerical approximation of the FOCM using nonstandard finite difference schemes. 

Four years later, \citet{aris2021dynamical} introduced a two-dimensional FOCM that further explored the integration of FDs and variable yield coefficients. This nonlinear fractional-order system was described by a set of FDEs employing the CFD, with state variables representing the substrate concentration and the cell mass concentration. The growth kinetics of the microorganisms were similarly modeled using the Monod expression, while the yield coefficient was formulated as a function of substrate concentration involving linear and constant terms. The study emphasized stability analysis and Hopf bifurcation of the FOCM, employing the Adams-type predictor-corrector method for numerical approximations. A year later, and further expanding on the investigation of FOCMs, \citet{mohd2022stability} analyzed the stability properties of another FOCM incorporating the CFD, a Monod growth function, a linear variable yield coefficient in the substrate concentration, and time delays to account for microbial response lag. The primary contribution was a numerical stability analysis of the model under varying fractional orders and time delays, focusing on equilibrium points and their stability, alongside numerical simulations. It was assumed that the system's mathematical formulation was well-defined for the chosen parameters and initial conditions. However, as with the earlier works, the existence, uniqueness, and well-posedness of the system's solutions were not explicitly proven. 

The aforementioned works have largely contributed to understanding FOCMs by highlighting the impact of FDs (and possibly time delays) on system stability. However, the lack of formal analysis regarding existence, uniqueness, and well-posedness limited their theoretical rigor. One of the early works to address these aspects within the fractional calculus framework dates back to the work of \citet{zhu2023forward}, who investigated a FOCM featuring non-monotonic, Haldane-type growth and random bounded disturbances on the input flow. This model extends classical chemostat models by incorporating memory effects through the CFD (order $\alpha \in (0,1]$) and stochasticity. The authors established the existence, uniqueness, positivity, and boundedness of a global solution using fixed-point theory, employing the Lipschitz continuity of nonlinear terms and properties of the Mittag-Leffler function.

A third and widely applied growth model is the Contois growth model, which describes the specific growth rate $\mu(s,x)$ as:
\begin{equation}\label{eq:contois}
\mu(s, x) = \frac{\mu_{\max} s}{K x + s},
\end{equation}
where $\mu_{\max} > 0$ is the maximum growth rate and $K > 0$ is the saturation constant. This model has demonstrated particular efficacy in modeling biomass growth for the optimization of chemostat performance and in various wastewater treatment applications \cite{bayen2020improvement, elgindy2023new}. Its success is partly attributable to its capacity to account for the inhibitory effects of elevated biomass concentrations, thereby rendering it well-suited for systems in which cell density significantly influences growth kinetics. Given the Contois model's ability to capture complex microbial interactions, it provides a suitable foundation for advancing chemostat modeling by incorporating fractional-order dynamics. These dynamics allow for a more accurate representation of memory-dependent behaviors in microbial ecosystems, which are critical for understanding periodic operations.

Building upon this robust theoretical foundation and addressing gaps in rigorous mathematical analysis, this study explores the theoretical properties of a FOCS, emphasizing the existence, uniqueness, and stability of periodic solutions under PBCs. By utilizing the CFDS and incorporating the Contois growth model, we develop a comprehensive mathematical framework that accurately captures the memory-dependent and time-varying dynamics of microbial ecosystems in wastewater treatment. This work advances the application of fractional-order modeling to biological systems, offering novel insights into the interplay between memory effects, periodic operation, and microbial growth kinetics governed by the Contois model in chemostat dynamics.

\subsection*{Notation and Preliminaries}
To improve clarity and accessibility, we consolidate the key mathematical notation and definitions used throughout this paper. This subsection provides a reference for readers, particularly those less familiar with fractional calculus or chemostat modeling.

\begin{itemize}
\item \textbf{Time and Periodicity}: $t \in [0, \infty)$ denotes time, and $T > 0$ is the fixed period for PBCs.
\item \textbf{State Variables}: $s(t) \in [0, s_{\text{in}}]$ represents the substrate (pollutant) concentration, and $x(t) > 0$ denotes the biomass concentration.
\item \textbf{System Parameters}: $s_{\text{in}} > 0$ is the inlet substrate concentration, $Y > 0$ is the yield coefficient, $D(t) \in [D_{\min}, D_{\max}]$ is the piecewise-continuous dilution rate where $D_{\min} > 0$, $\bar{D} = \frac{1}{T} \int_0^T D(t) \, dt$ is the average dilution rate, $\mu_{\max} > 0$ is the maximum growth rate, and $K > 0$ is the saturation constant in the Contois growth model. $\vartheta > 0$ is a characteristic time constant with units of time, introduced to ensure dimensional consistency in the fractional-order differential equations. 
\item \textbf{Fractional Calculus}: $\alpha \in (0, 1)$ is the fractional order, $\Gamma(\cdot)$ is the Gamma function, $L > 0$ is the sliding memory length, and $f'$ denotes the first derivative of the function $f(t)$. The CFDS is defined by \eqref{eq:caputo}. An extensive exposition on this operator and its applications can be found in \citet{bourafa2021periodic}, Elgindy \cite{elgindy2024fourierA,elgindy2024fourierB,elgindy2024fourierC}.
${}_L I^\alpha$ is the Riemann-Liouville fractional integral of order $\alpha$ over the interval $[t-L, t]$, defined as:
\begin{equation}\label{eq:June20250406}
{}_L I^\alpha f(t) = \frac{1}{\Gamma(\alpha)} \int_{t-L}^t (t - \tau)^{\alpha-1} f(\tau) \, d\tau.
\end{equation}
\item \textbf{Growth Kinetics}: The specific growth rate of the microorganisms, $\mu(s, x)$, follows the Contois model \eqref{eq:contois}.
\item \textbf{Function Spaces}: $AC_T$ denotes the space of absolutely continuous $T$-periodic functions with norm $\|s\|_{AC} = \|s\|_{\infty} + \|s'\|_{L^1}$, where $\|s\|_{\infty} = \sup_{t \in [0,T]} |s(t)|$ and $\|s'\|_{L^1} = \int_0^T |s'(t)|\,dt$. $X = \{s \in AC_T \mid 0 \leq s(t) \leq s_{\text{in}}\}$ is a compact and convex subset of $AC_T$ representing the set of feasible substrate concentrations.
\item \textbf{Operators}: $\Phi_s$ is the fixed-point operator defined for proving existence of periodic solutions.
\end{itemize}

These definitions ensure consistency and facilitate understanding of the mathematical framework developed in subsequent sections.

The remainder of this paper is organized as follows. Section \ref{sec:MD1} presents a detailed description of the FOCM with sliding memory and PBCs, establishing the mathematical framework that governs substrate and biomass concentrations. In Section \ref{sec:RFOCS1}, we reduce the two-dimensional FDEs to a one-dimensional FDE, simplifying the analysis while preserving the essential dynamics of the system. Within this section, Section \ref{subsec:UTS1} establishes the uniqueness of the trivial solution, providing a critical foundation for our subsequent analysis, while Section \ref{subsec:RODS1} derives the reduced one-dimensional system that enables more tractable mathematical treatment. Section \ref{subsec:NTEFODS} analyzes the non-trivial equilibria of the fractional one-dimensional system under constant dilution rates, deriving stability conditions that characterize the long-term behavior of the chemostat. Section \ref{subsec:WPFOCS} establishes the well-posedness of the system, with Section \ref{subsec:EPCS1} proving the existence of periodic Carath\'{e}odory solutions and Section \ref{subsec:UPCS1} demonstrating their uniqueness using the Carath\'{e}odory framework and fixed-point theorems. Section \ref{subsec:PosBound} focuses on the positivity and boundedness properties of solutions, demonstrating that the model maintains biologically meaningful concentrations within a physically feasible invariant set. In Section \ref{sec:numerical_simulations}, we present numerical simulations to validate the theoretical results, focusing on the existence, uniqueness, positivity, boundedness, and stability of periodic solutions for the FOCS with sliding memory and PBCs, using the FG-PS method. Finally, Section \ref{sec:Conc} concludes the paper with a discussion of the key contributions to mathematical theory and sustainable water treatment practices, and suggests directions for future research in this interdisciplinary field.

\section{Model Description}
\label{sec:MD1}
We consider a FOCS designed for sustainable water treatment, modeling the dynamics of substrate concentration $s(t)$ and biomass concentration $x(t)$ for continuous biological water treatment. The system operates over a fixed period $T$ with an average dilution rate $\bar{D}$, which corresponds to the total treated volume $\bar{Q}$ normalized by the chemostat volume $V$ and period $T$, i.e., $\bar{D} = \bar{Q}/(V T)$. This constraint ensures a consistent treatment capacity. The system is governed by the following FDEs with a sliding memory window:
\begin{align}
{}^{\text{MC}}_{L}D_t^\alpha s(t) &= \vartheta^{1-\alpha} \left[ -\frac{1}{Y} \mu(s(t), x(t))x(t) + D(t)(s_{\text{in}} - s(t)) \right],\label{eq:sysdyn1}\\
{}^{\text{MC}}_{L}D_t^\alpha x(t) &= \vartheta^{1-\alpha} [\mu(s(t), x(t)) - D(t)]\,x(t),\label{eq:sysdyn2}
\end{align}
with PBCs:
\begin{align}
s(t) &= s(t+T),\quad \forall t \in [0, \infty),\label{eq:periodic1}\\
x(t) &= x(t+T),\quad \forall t \in [0, \infty),\label{eq:periodic2}\\
D(t) &= D(t+T),\quad \forall t \in [0, \infty).\label{eq:periodic3}
\end{align}
The biomass growth model is assumed to be the Contois growth model \eqref{eq:contois}. The fractional dynamics are modeled using the CFDS defined by \eqref{eq:caputo}. The sliding memory window $[t-L, t]$ captures finite memory effects to improve the realism of microbial growth and substrate degradation modeling compared to integer-order models. To ensure dimensional consistency in the fractional-order model, we introduce the parameter $\vartheta > 0$, a characteristic time constant with units of time. Physically, $\vartheta$ can represent a reference time scale associated with the system's dynamics, quantifying the influence of past states within the sliding memory window $[t-L, t]$ to capture delayed microbial responses or cumulative nutrient effects critical for pollutant degradation. We assume that the substrate and biomass dynamics share the same characteristic time scale, which is reasonable given their coupled nature in the chemostat. The substrate is consumed by the biomass, and their dynamics are driven by the same microbial growth kinetics and dilution rate. A single $\vartheta$ reflects a unified memory effect, where the system's historical states influence both variables similarly, which is consistent with the chemostat's operation as a single bioreactor. In practice, $\vartheta$ can be tuned to reflect operational time scales, such as hydraulic retention time, to improve the model's applicability to real-world water treatment systems. When $\alpha = 1, \vartheta^{1-\alpha} = 1$, and we recover the classical integer-order chemostat model.

The system's periodic operation and sliding memory effects are key to optimizing pollutant degradation and ensuring consistent water quality, addressing critical needs in clean water and sanitation. This study focuses on the well-posedness of the system, the existence and uniqueness of periodic solutions, the analysis of non-trivial equilibria, their stability properties, and the positivity and boundedness of solutions, validated through numerical simulations, with applications to sustainable water treatment.

With the FOCM thoroughly defined, the subsequent analysis shifts towards reducing the multi-dimensional system into a more mathematically tractable form, thereby facilitating a rigorous analytical investigation.

\section{Reduction of FOCS}
\label{sec:RFOCS1}
To simplify the analysis, we reduce the two-dimensional FDE system \eqref{eq:sysdyn1}--\eqref{eq:sysdyn2} to a one-dimensional equation using the PBCs and properties of the CFDS. To this end, we apply the transformation from \cite{bayen2020improvement} that relates the substrate and biomass concentrations:
\begin{equation}\label{eq:trans}
z(t) = Y (s_{\text{in}} - s(t)) - x(t).
\end{equation}
The FD of $z(t)$ is:
\begin{equation}\label{eq:trans_deriv}
{}^{\text{MC}}_{L}D_t^\alpha z(t) = -Y {}^{\text{MC}}_{L}D_t^\alpha s(t) - {}^{\text{MC}}_{L}D_t^\alpha x(t).
\end{equation}
Substituting \eqref{eq:sysdyn1} and \eqref{eq:sysdyn2} into \eqref{eq:trans_deriv} yields:
\begin{align}
{}^{\text{MC}}_{L}D_t^\alpha z(t) &= \vartheta^{1-\alpha} \left[ -Y \left( -\frac{1}{Y} \mu(s(t), x(t)) x(t) + D(t) (s_{\text{in}} - s(t)) \right) - \left[\mu(s(t), x(t)) - D(t)\right] x(t) \right] \notag \\
&= \vartheta^{1-\alpha} D(t) \left[ -Y (s_{\text{in}} - s(t)) + x(t) \right] = -\vartheta^{1-\alpha} D(t)z(t).\label{eq:FDEnewK1}
\end{align}
Given the PBCs \eqref{eq:periodic1}-\eqref{eq:periodic3}, it follows that $z(t) = z(t+T)$. We now prove that the trivial solution $z(t) \equiv 0$ is the unique periodic solution.

\subsection{Uniqueness of the Trivial Solution}
\label{subsec:UTS1}
Multiply both sides of the FDE \eqref{eq:FDEnewK1} by $z(t)$ and integrate over $[0, T]$:
\begin{equation}\label{eq:energy}
\int_0^T z(t)\,{}^{\text{MC}}_{L}D_t^\alpha z(t) \, dt = -\vartheta^{1-\alpha} \int_0^T D(t) z^2(t) \, dt.
\end{equation}
This equation resembles a fractional energy-like dissipation relation. The left-hand side represents a nonlocal, memory-dependent energy loss over one cycle, influenced by the FD’s sliding memory window $[t-L, t]$. The right-hand side, with $D(t) > 0$ (since the dilution rate is positive in the chemostat), acts as a dissipation term proportional to $z^2(t)$. Because $D(t)$ is positive, the right-hand side is non-positive and strictly negative unless $z(t) = 0$ a.e. For periodic $z(t)$, the energy-like quantity cannot decrease indefinitely over successive cycles; thus, the only consistent solution is the trivial solution $z(t) = 0$. This confirms that no non-trivial periodic solutions exist, and hence $z(t) \equiv 0$. This implies that 
\begin{equation}\label{eq:biomass}
x(t) = Y (s_{\text{in}} - s(t)).
\end{equation}

\subsection{Reduced One-Dimensional System}
\label{subsec:RODS1}
Substituting \eqref{eq:biomass} into \eqref{eq:sysdyn1} gives:
\begin{align}
{}^{\text{MC}}_{L}D_t^\alpha s(t) &= \vartheta^{1-\alpha} \left[-\frac{1}{Y} \mu(s(t), Y (s_{\text{in}} - s(t))) Y (s_{\text{in}} - s(t)) + D(t) (s_{\text{in}} - s(t))\right] \notag \\
&= \vartheta^{1-\alpha} \left[-\mu(s(t), Y (s_{\text{in}} - s(t))) (s_{\text{in}} - s(t)) + D(t) (s_{\text{in}} - s(t))\right].
\end{align}
Define
\begin{equation}\label{eq:nu}
\nu(s(t)) = \mu(s(t), Y (s_{\text{in}} - s(t))) = \frac{\mu_{\max} s}{K Y (s_{\text{in}} - s) + s},
\end{equation}
where $\nu(s)$ encapsulates the growth kinetics under the constraint \eqref{eq:biomass}. The reduced system becomes
\begin{equation}\label{eq:1DFDS}
{}^{\text{MC}}_{L}D_t^\alpha s(t) = \vartheta^{1-\alpha} [D(t) - \nu(s(t))] (s_{\text{in}} - s(t)),
\end{equation}
with the periodic condition \eqref{eq:periodic1}. This one-dimensional FDE governs the substrate concentration $s(t)$ under a periodic dilution rate $D(t)$. Clearly, it admits the trivial solution $s(t) = s_{\text{in}}$, which corresponds to the washout state characterized by the condition $(s(t), x(t)) = (s_{\text{in}}, 0)$.

\subsection{Non-Trivial Equilibrium of the Fractional One-Dimensional System}
\label{subsec:NTEFODS}
To derive the non-trivial equilibria\footnote{The non-trivial equilibrium is the steady-state solution with $\bar{s} < s_{\text{in}}$, ensuring $x = Y(s_{\text{in}} - \bar{s}) > 0$.} of the reduced one-dimensional system \eqref{eq:1DFDS} with PBC \eqref{eq:periodic1}, we consider the steady-state condition where the substrate concentration $s$ is constant, i.e., $s(t) = \bar{s}$, and the dilution rate is constant, i.e., $D(t) = \bar{D}$. At steady state, the CFDS vanishes because ${}^{\text{MC}}_{L}D_t^\alpha s(t) = 0$ for a constant $s$. Thus, the steady-state condition for Eq. \eqref{eq:1DFDS} is given by
\begin{equation}
0 = [\bar{D} - \nu(\bar{s})] (s_{\text{in}} - \bar{s}).
\end{equation}
Since the biomass concentration $x(t) > 0$, then $\bar{s} < s_{\text{in}}$ by Eq. \eqref{eq:biomass}. For non-trivial equilibria to exist, we require:
\begin{equation}\label{eq:NTE1}
\nu(\bar{s}) = \frac{\mu_{\max} \bar{s}}{K Y (s_{\text{in}} - \bar{s}) + \bar{s}} = \bar{D}.
\end{equation}
The unique solution to this equation is given by
\begin{equation}\label{eq:NTE2}
\bar{s} = \frac{\bar{D} K Y s_{\text{in}}}{\bar{D} K Y + \mu_{\max} - \bar{D}},
\end{equation}
provided $\bar{D} < \mu_{\max}$, which guarantees a positive and physically meaningful equilibrium ($\bar{s} < s_{\text{in}}$). Thus, there exists exactly one non-trivial equilibrium for a given $\bar{D} < \mu_{\max}$.

While this section established equilibria under constant dilution rates, practical systems often operate under time-varying $D$. We now generalize these results to periodic, non-steady dilution rates by proving well-posedness.

\subsection{Well-Posedness of the FOCS}
\label{subsec:WPFOCS}
In this section, we aim to prove the existence of non-trivial, periodic solutions $s$ for the FDE \eqref{eq:1DFDS} satisfying the PBC \eqref{eq:periodic1} under a dilution rate $D$ satisfying the PBC \eqref{eq:periodic3}. To this end, let us denote the right-hand side of the FDE \eqref{eq:1DFDS} by $f(t, s(t))$:
\[f(t, s(t)) = \vartheta^{1-\alpha} [D(t) - \nu(s(t))] (s_{\text{in}} - s(t)).\]
The following lemma establishes some key regularity properties of $f$.

\begin{lemma}[Regularity of $f$]\label{lem:f_properties}
Under the given assumptions of the FOCM, the function $f$ satisfies the following two properties:
\begin{enumerate}[label=(\roman*)]
    \item For each fixed $s \in [0, s_{\text{in}}]$, $f$ is Lebesgue measurable in $t$.
    \item $f$ is Lipschitz continuous with respect to $s$.
\end{enumerate}
\end{lemma}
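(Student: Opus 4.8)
The plan is to treat the two assertions separately, since they exploit different structural features of $f$. For part (i), I observe that for fixed $s \in [0, s_{\text{in}}]$ the dependence of $f$ on $t$ enters only through the dilution rate $D(t)$, because $\nu(s)$ and $(s_{\text{in}} - s)$ are then constants. Hence $f(\cdot, s)$ is an affine function of $D$,
\[
f(t, s) = \vartheta^{1-\alpha}(s_{\text{in}} - s)\,D(t) - \vartheta^{1-\alpha}\nu(s)(s_{\text{in}} - s).
\]
Since $D$ is piecewise-continuous by hypothesis, it has at most countably many discontinuities and is therefore Lebesgue measurable; an affine transformation of a measurable function is measurable, which settles (i) at once.

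For part (ii), I would decompose $f$ as $f(t,s) = \vartheta^{1-\alpha}\bigl[D(t)(s_{\text{in}} - s) - g(s)\bigr]$, where $g(s) := \nu(s)(s_{\text{in}} - s)$. The first term is affine in $s$ with slope $-D(t)$, hence Lipschitz with constant $\vartheta^{1-\alpha} D_{\max}$ uniformly in $t$, using $D(t) \leq D_{\max}$. The nonlinear term $g$ carries the substance of the argument and is where I expect the main obstacle to lie.

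The crux is establishing that $g$—and thus the Contois kinetics $\nu$—is Lipschitz on $[0, s_{\text{in}}]$, which is delicate because $\nu$ is a rational function and one must rule out any vanishing of its denominator. Writing $\phi(s) := KY(s_{\text{in}} - s) + s = KYs_{\text{in}} + (1 - KY)s$, I note that $\phi$ is affine in $s$ and therefore attains its minimum over the compact interval $[0, s_{\text{in}}]$ at an endpoint; since $\phi(0) = KYs_{\text{in}}$ and $\phi(s_{\text{in}}) = s_{\text{in}}$, this yields the uniform lower bound $\phi(s) \geq s_{\text{in}}\min\{KY, 1\} > 0$. With the denominator bounded away from zero, $g$ is a quotient of the polynomial $\mu_{\max}\,s\,(s_{\text{in}} - s)$ by the non-vanishing affine function $\phi$, hence continuously differentiable on $[0, s_{\text{in}}]$; its derivative $g'$ is continuous on a compact set and therefore bounded, and the mean value theorem gives $|g(s_1) - g(s_2)| \leq \|g'\|_\infty\,|s_1 - s_2|$. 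Combining the two terms produces the global estimate $|f(t,s_1) - f(t,s_2)| \leq L_f\,|s_1 - s_2|$ with $L_f = \vartheta^{1-\alpha}\bigl(D_{\max} + \|g'\|_\infty\bigr)$, uniform in $t$, which completes (ii).
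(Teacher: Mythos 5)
Your proof is correct, and it differs from the paper's in how the nonlinear term is handled. For part (i) you and the paper argue identically: for fixed $s$ the only $t$-dependence is through the measurable (piecewise-continuous) function $D$, and affine images of measurable functions are measurable. For part (ii) the paper first proves that $\nu$ itself is Lipschitz, by computing $\nu'(s) = K Y \mu_{\max} s_{\text{in}}/(K Y(s_{\text{in}}-s)+s)^2$ and maximizing it at the endpoints to get the explicit constant $\nu_{\max} = \frac{\mu_{\max}}{s_{\text{in}}}\max\{KY, \tfrac{1}{KY}\}$; it then expands $f(t,s_1)-f(t,s_2)$, inserts the cross term $\nu(s_1)s_2$, and applies the triangle inequality to the product $\nu(s)\,s$, arriving at $L_f = \vartheta^{1-\alpha}(D_{\max}+\mu_{\max}+2 s_{\text{in}}\nu_{\max})$. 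You instead absorb the whole nonlinearity into $g(s) = \nu(s)(s_{\text{in}}-s)$ and apply the mean value theorem to $g$ directly, which eliminates the add-and-subtract bookkeeping entirely; your explicit lower bound $\phi(s) \geq s_{\text{in}}\min\{KY,1\} > 0$ on the denominator is also a point the paper leaves implicit when it asserts $\nu'$ is "continuous and thus bounded." The trade-off is that your Lipschitz constant $\vartheta^{1-\alpha}(D_{\max}+\|g'\|_\infty)$ is left implicit, whereas the paper's is fully explicit in the model parameters; this costs nothing downstream, since later arguments (e.g., the continuity of the fixed-point operator) only use the existence of some finite $L_f$. Incidentally, your $g$ coincides with the function $h(s)=\nu(s)(s_{\text{in}}-s)$ that the paper introduces later in the uniqueness theorem, so its derivative is available in closed form there if an explicit constant were ever wanted.
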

\begin{proof}
\begin{enumerate}[label=(\roman*)]
    \item Since $D$ is Lebesgue measurable by assumption, and $\nu$ and $(s_{\text{in}} - s)$ are continuous in $s$, their product with $D$ and any linear combinations thereof are Lebesgue measurable in $t$, for each fixed $s \in [0, s_{\text{in}}]$. 
    \item Since 
    \begin{equation}\label{eq:nuprime1}
    \nu'(s) = \frac{K Y \mu_{\max} s_{\text{in}}}{(K Y (s_{\text{in}} - s) + s)^2},    
    \end{equation}
     is continuous and thus bounded on the compact domain $[0, s_{\text{in}}]$, there exists $\nu_{\max} > 0$ such that:
    \[
    |\nu(s_2) - \nu(s_1)| \le \nu_{\max} |s_2 - s_1|,\quad \text{for all }s_1, s_2 \in [0, s_{\text{in}}], 
    \]
    by the Mean Value Theorem. Moreover, $\nu'(s)$ is inversely proportional to the square of the linear function $K Y (s_{\text{in}}-s)+s$, which achieves its minimum at the boundaries of the compact interval $[0, s_{\text{in}}]$. Thus, the maximum of $\nu'(s)$ occurs at the endpoints $s = 0$ or $s = s_{\text{in}}$:
\[
\nu_{\max} = \max \left\{ \nu'(0), \nu'(s_{\text{in}}) \right\} = \frac{\mu_{\max}}{s_{\text{in}}} \max \left\{ K Y, \frac{1}{K Y} \right\}.
\]
This shows that $\nu$ is Lipschitz continuous on $[0, s_{\text{in}}]$ with Lipschitz constant $\nu_{\max}$. On the other hand, observe that
 \begin{gather*}
    f(t, s_1) - f(t, s_2) = \vartheta^{1-\alpha} \left[[D(t) - \nu(s_1)](s_{\text{in}} - s_1) - [D(t) - \nu(s_2)](s_{\text{in}} - s_2)\right]\\
    = \vartheta^{1-\alpha} \left[D(t)(s_2 - s_1) + s_{\text{in}} [\nu(s_2) - \nu(s_1)] + \nu(s_1) s_1 - \nu(s_2) s_2\right].     
 \end{gather*}
    Add and subtract $\nu(s_1) s_2$ to obtain:
 \begin{gather*}
    |f(t, s_1) - f(t, s_2)| = \vartheta^{1-\alpha} \left[\left| D(t)(s_2 - s_1) + s_{\text{in}} [\nu(s_2) - \nu(s_1)] + \nu(s_1)(s_1 - s_2) + s_2 [\nu(s_1) - \nu(s_2)] \right|\right]\\
    \le \vartheta^{1-\alpha} \left[|D(t)| |s_2 - s_1| + s_{\text{in}} |\nu(s_2) - \nu(s_1)| + |\nu(s_1)| |s_1 - s_2| + |s_2| |\nu(s_1) - \nu(s_2)|\right],
\end{gather*}
by the triangle inequality. Since $D(t) \in [D_{\min}, D_{\max}]$, $|\nu(s)| \le \mu_{\max}$, $s_2 \le s_{\text{in}}$, then by using the Lipschitz condition for $\nu(s)$ we obtain:
    \[
    |f(t, s_1) - f(t, s_2)| \le \vartheta^{1-\alpha} (D_{\max} + \mu_{\max} + 2 s_{\text{in}} \nu_{\max})\,|s_2 - s_1|.
    \]
    Thus, $f$ is Lipschitz continuous in $s$ with Lipschitz constant $L_f = \vartheta^{1-\alpha} (D_{\max}  + \mu_{\max} + 2 s_{\text{in}} \nu_{\max})$.
\end{enumerate}
\end{proof}

As a consequence, we have the following corollary.

\begin{corollary}[Additional Regularity Properties of $f$]\label{cor:f_properties}
Let the assumptions of Lemma \ref{lem:f_properties} hold true. Then:
\begin{enumerate}[label=(\roman*)]
    \item $f$ is continuous in $s$, for each fixed $t \in [0, T]$.
    \item $f$ is bounded, for bounded $s \in [0, s_{\text{in}}]$.
\end{enumerate}
\end{corollary}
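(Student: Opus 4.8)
The plan is to deduce both properties directly from Lemma \ref{lem:f_properties} together with the explicit structure of $f$ and the compactness of the domain $[0, s_{\text{in}}]$, so that no new machinery is needed; this is why the statement is phrased as a corollary rather than a standalone result.

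For part (i), I would note that Lipschitz continuity in $s$, already established in Lemma \ref{lem:f_properties}(ii), immediately yields continuity in $s$. Fixing any $t \in [0,T]$ and letting $s_1 \to s_2$ in the bound $|f(t, s_1) - f(t, s_2)| \le L_f\,|s_2 - s_1|$ forces $f(t, s_1) \to f(t, s_2)$. In fact $f$ is uniformly continuous in $s$ with a modulus governed by $L_f$, which is independent of $t$, so this is essentially a one-line consequence of the Lipschitz estimate.

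For part (ii), I would bound each factor of $f(t, s) = \vartheta^{1-\alpha}[D(t) - \nu(s)](s_{\text{in}} - s)$ separately over the compact set $s \in [0, s_{\text{in}}]$. The prefactor $\vartheta^{1-\alpha}$ is a fixed constant; the dilution rate obeys $|D(t)| \le D_{\max}$ by assumption; the growth term satisfies $0 \le \nu(s) \le \mu_{\max}$, which holds because the denominator $KY(s_{\text{in}} - s) + s$ dominates $s$ on $[0, s_{\text{in}}]$ with equality only at $s = s_{\text{in}}$; and $|s_{\text{in}} - s| \le s_{\text{in}}$. Applying the triangle inequality then gives the uniform bound $|f(t, s)| \le \vartheta^{1-\alpha}(D_{\max} + \mu_{\max})\,s_{\text{in}} =: M$, valid for every $t$ and every $s \in [0, s_{\text{in}}]$.

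I do not anticipate a genuine obstacle, since the corollary merely repackages facts already supplied by the lemma and the standing modeling assumptions. The only point meriting a moment's care is the elementary estimate $\nu(s) \le \mu_{\max}$, which the proof of Lemma \ref{lem:f_properties} invoked without isolating it; I would record that verification explicitly here so that the boundedness argument in part (ii) is fully self-contained.
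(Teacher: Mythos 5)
Your proposal is correct, and part (i) coincides with the paper's proof: Lipschitz continuity from Lemma \ref{lem:f_properties}(ii) immediately gives continuity in $s$. Where you genuinely diverge is part (ii). The paper argues abstractly: $f$ is continuous in $s$ by (i), and $[0, s_{\text{in}}]$ is compact, so $f$ is bounded on $[0, s_{\text{in}}]$ (the extreme value theorem). You instead derive the explicit uniform estimate
\[
|f(t,s)| \le \vartheta^{1-\alpha}\left(D_{\max} + \mu_{\max}\right) s_{\text{in}},
\]
by bounding each factor, using $D(t) \le D_{\max}$, $0 \le \nu(s) \le \mu_{\max}$, and $|s_{\text{in}} - s| \le s_{\text{in}}$. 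Your route buys something the paper's corollary proof technically does not deliver on its own: the compactness argument, applied for each fixed $t$, yields a bound that could a priori depend on $t$, whereas the Carath\'{e}odory condition 3 and the subsequent compactness argument (Lemma \ref{lem:3}) require a bound uniform in $t$; this uniformity comes from $D(t) \in [D_{\min}, D_{\max}]$, which your estimate invokes explicitly but the paper's corollary proof leaves implicit. Indeed, the constant you produce is exactly the bound $M_f = \vartheta^{1-\alpha}(D_{\max} + \mu_{\max})s_{\text{in}}$ that the paper only writes down later, inside the proof of Lemma \ref{lem:3}. Your added verification that $\nu(s) \le \mu_{\max}$ (since $KY(s_{\text{in}}-s) + s \ge s$ on $[0, s_{\text{in}}]$) is also a worthwhile piece of bookkeeping that the paper uses without isolating. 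In short: same statement, same part (i), but a more quantitative and self-contained part (ii) that front-loads work the paper defers.
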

\begin{proof}
\begin{enumerate}[label=(\roman*)]
    \item Since $f$ is Lipschitz continuous in $s$ by Lemma \ref{lem:f_properties}, it is also continuous in $s$. 
    \item Since $f$ is continuous in $s$ by (i), and $[0, s_{\text{in}}]$ is a compact set, $f$ is bounded on $[0, s_{\text{in}}]$. 
\end{enumerate}
\end{proof}

To continue our proof, we need to apply the Carath\'{e}odory framework for FDEs to account for the piecewise continuous nature of the dilution rate input $D$. To this end, we need first to establish the integral form of the FDE governing the substrate concentration in the FOCS. The following lemma demonstrates the equivalence between the FDE and its corresponding Volterra-type integral equation using the inverse operator property of the CFDS. 

\begin{lemma}
Let $s: [0,\infty) \to [0,\infty)$ be an absolutely continuous function on every compact interval. Then, $s$ satisfies the FDE:
\begin{equation}\label{eq:EDT1}
{}_L^{\text{MC}} D_t^\alpha s(t) = f(t, s(t)),
\end{equation}
if and only if it satisfies the integral equation:
\begin{equation}\label{eq:Volterra1}
s(t) = s(t-L+k T) + \frac{1}{\Gamma(\alpha)} \int_{t-L}^t (t - \tau)^{\alpha-1} f(\tau, s(\tau)) \, d\tau,
\end{equation}
where $k$ is the smallest integer such that $t-L+k T \in [0, T]\,\forall t \ge 0$.
\end{lemma}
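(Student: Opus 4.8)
The plan is to derive the equivalence from the fundamental theorem of fractional calculus adapted to the sliding memory window, exploiting the fact that at each time $t$ the operator ${}^{\text{MC}}_L D_t^\alpha$ in \eqref{eq:caputo} acts through the interval $[t-L,t]$ that also carries the Riemann--Liouville integral ${}_L I^\alpha$ in \eqref{eq:June20250406}. The backbone of both implications is the inverse identity
\[
{}_L I^\alpha\,{}^{\text{MC}}_L D_t^\alpha s(t) = s(t) - s(t-L),
\]
the sliding-window analogue of the classical statement that the Riemann--Liouville integral left-inverts the Caputo derivative up to the value at the left endpoint of the window. First I would establish this identity by writing the left-hand side as the nested integral of $s'$ against the product kernel, passing to the lag variables $u=t-\tau$ and $v=\tau-\sigma$ to expose its convolution structure, and applying Fubini's theorem---justified because $s$ is absolutely continuous on compact intervals, so $s'\in L^1_{\mathrm{loc}}$, while the kernel $u^{\alpha-1}(w-u)^{-\alpha}$ is integrable for $\alpha\in(0,1)$. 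Collapsing the resulting inner integral through the Beta relation $\int_0^w u^{\alpha-1}(w-u)^{-\alpha}\,du=\Gamma(\alpha)\Gamma(1-\alpha)$ reduces the double integral to $\int_{t-L}^t s'(\sigma)\,d\sigma=s(t)-s(t-L)$.

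With the identity in hand, the forward direction is immediate: applying ${}_L I^\alpha$ to \eqref{eq:EDT1} yields $s(t)-s(t-L)={}_L I^\alpha f(t,s(t))$, and the $T$-periodicity \eqref{eq:periodic1} lets me rewrite the boundary value as $s(t-L)=s(t-L+kT)$ for the unique smallest integer $k$ lifting the argument into the reference cell $[0,T]$, which is exactly \eqref{eq:Volterra1}. For the converse I would apply ${}^{\text{MC}}_L D_t^\alpha$ to \eqref{eq:Volterra1}: the right-inverse property ${}^{\text{MC}}_L D_t^\alpha\,{}_L I^\alpha g=g$ recovers $f(t,s(t))$ from the integral term, and the periodic boundary term must be shown not to contribute, returning \eqref{eq:EDT1}. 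Throughout, Lemma \ref{lem:f_properties} and Corollary \ref{cor:f_properties} guarantee that $f(\cdot,s(\cdot))$ is measurable, bounded, and hence integrable against the weakly singular kernel, so every integral in the argument is well defined and Fubini applies.

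The main obstacle is precisely the moving lower limit $t-L$, which distinguishes this operator from the classical Caputo derivative with a fixed base point. In the composition ${}_L I^\alpha\,{}^{\text{MC}}_L D_t^\alpha$ the inner derivative contributions themselves carry sliding memory windows, so the Fubini exchange runs over the nonrectangular region $\{(\tau,\sigma): t-L\le\tau\le t,\ \tau-L\le\sigma\le\tau\}$ rather than a triangle with a fixed apex; splitting this region along $\sigma=t-L$ and controlling the secondary piece on $[t-2L,t-L]$ is the delicate computation, and it is here that the periodicity of $s$ across successive cycles must be used to keep the boundary term self-consistent---this is exactly why the shift index $k$ enters \eqref{eq:Volterra1}. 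The same subtlety reappears in the converse, where one must verify that the $t$-dependent boundary term $s(t-L+kT)$ carries no net fractional derivative, again leaning on the $T$-periodicity \eqref{eq:periodic1} rather than treating it as a frozen constant as in the fixed-base-point theory.
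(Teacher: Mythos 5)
Your skeleton is the same as the paper's: the forward direction applies ${}_L I^\alpha$ and invokes the left-inverse identity ${}_L I^\alpha\,{}^{\text{MC}}_L D_t^\alpha s(t) = s(t)-s(t-L)$ followed by the periodicity adjustment $s(t-L)=s(t-L+kT)$, and the converse applies the right-inverse property ${}^{\text{MC}}_L D_t^\alpha\,{}_L I^\alpha g = g$ together with the regularity of $f$ from Lemma \ref{lem:f_properties} and Corollary \ref{cor:f_properties}. The difference is that the paper obtains the key identity by citation to the fixed-base-point Caputo theory, whereas you attempt to prove it by Fubini and the Beta integral, and in doing so you correctly expose the real difficulty: the composition runs over the sheared region $\{(\tau,\sigma): t-L\le\tau\le t,\ \tau-L\le\sigma\le\tau\}$ rather than the triangle $\{t-L\le\sigma\le\tau\le t\}$ of the classical theory.

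The gap is in how you propose to dispose of the secondary piece. Splitting along $\sigma=t-L$, the triangular part does collapse via the Beta relation to $\int_{t-L}^t s'(\sigma)\,d\sigma=s(t)-s(t-L)$, but the leftover piece equals
\[
\frac{1}{\Gamma(\alpha)\Gamma(1-\alpha)}\int_{t-2L}^{t-L}s'(\sigma)\left[\int_{t-L}^{\sigma+L}(t-\tau)^{\alpha-1}(\tau-\sigma)^{-\alpha}\,d\tau\right]d\sigma,
\]
an integral of $s'$ over $[t-2L,t-L]$ against a strictly positive incomplete-Beta weight depending only on $t-\sigma$. $T$-periodicity of $s$ does not make this vanish: writing it as a convolution of $s'$ with that nonnegative kernel, it is nonzero for a generic periodic $s$ (e.g., a sinusoid), so the identity you plan to prove is not an identity for the sliding-memory operator, and the shift index $k$ is purely notational (for periodic $s$, $s(t-L+kT)=s(t-L)$) and cannot repair it. The same issue recurs in your converse: $s(t-L+kT)=s(t-L)$ is a genuine function of $t$, and its sliding Caputo derivative $\frac{1}{\Gamma(1-\alpha)}\int_{t-L}^t(t-\tau)^{-\alpha}s'(\tau-L)\,d\tau$ need not vanish for periodic $s$. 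To be fair, the paper's proof conceals the same hole, since it freezes the lower limit $t-L$ as if it were a fixed base point so that the cited classical inverse property applies, which is inconsistent with the FDE holding with window $[\tau-L,\tau]$ at every $\tau$; but your plan, carried out honestly, stalls exactly at the step you yourself flag as delicate, and periodicity is not the missing ingredient.
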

\begin{proof}
To prove the lemma, we use the inverse operator property of the CFD with a variable lower limit to show that the FDE is equivalent to the integral equation \eqref{eq:Volterra1}. Assume $s$ satisfies the FDE \eqref{eq:EDT1}, and apply the Riemann-Liouville fractional integral of order $\alpha$ over the interval $[t-L, t]$, defined by \eqref{eq:June20250406}, to both sides of the FDE to obtain:
\[
{}_L I^\alpha \left( {}_L^{\text{MC}} D_t^\alpha s(t) \right) = {}_L I^\alpha f(t, s(t)).
\]
For the CFD with a variable lower limit $t-L$, the inverse operator property states that applying the fractional integral ${}_L I^\alpha$ to ${}_L^{\text{MC}} D_t^\alpha s(t)$ recovers the function $s$ up to its value at the lower bound of the integration interval \cite{diethelm2010analysis}:
\[
{}_L I^\alpha \left( {}_L^{\text{MC}} D_t^\alpha s(t) \right) = s(t) - s(t-L),
\]
provided that $s$ is absolutely continuous, from which Eq. \eqref{eq:Volterra1} is derived with periodicity adjustment. Now, assume that $s$ satisfies Eq. \eqref{eq:Volterra1}. Since ${}_L^{\text{MC}} D_t^\alpha \left( {}_L I^\alpha g(t) \right) = g(t)$ for a sufficiently regular function $g$ (e.g., Lebesgue integrable), then By Lemma \ref{lem:f_properties} and Corollary \ref{cor:f_properties}:
\[
{}_L^{\text{MC}} D_t^\alpha \left( \frac{1}{\Gamma(\alpha)} \int_{t-L}^t (t - \tau)^{\alpha-1} f(\tau, s(\tau)) \, d\tau \right) = f(t, s(t)).
\]
Thus, the FDE is recovered, confirming equivalence. This completes the proof.
\end{proof}
We now introduce the following new definition that extends the classical Carath\'{e}odory solution for ordinary differential equations to fractional-order dynamics with a sliding memory window, accommodating periodic conditions. It is designed for systems with a right-hand side that may be discontinuous in time but Lebesgue measurable, ensuring the existence of absolutely continuous solutions that satisfy the differential equation a.e.; cf. \cite{coddington1955theory}.

\textbf{Definition (Periodic Carath\'{e}odory Solution)}: A function $s: [0,\infty) \rightarrow [0, \infty)$ is a $T$-periodic Carath\'{e}odory solution to the FDE \eqref{eq:1DFDS} if:
\begin{enumerate}[label=(\alph*)]
\item $s$ is absolutely continuous on every compact set in $[0,\infty)$.
\item $s$ satisfies the PBC \eqref{eq:periodic1}.
\item $s$ satisfies the integral equation \eqref{eq:Volterra1} for almost all $t \in [0,T]$.
\end{enumerate}
Eq. \eqref{eq:Volterra1} defines the solution in terms of a Volterra-type integral equation with a sliding memory window and a delay term, which is the integral equivalent of the FDE \eqref{eq:1DFDS} under the CFDS. 

For existence of Carath\'{e}odory solutions, the right-hand side function $f$ typically needs to satisfy the following Carath\'{e}odory conditions, which we have already verified by Lemma \ref{lem:f_properties} and Corollary \ref{cor:f_properties}.\\[0.25em] 
\textbf{Carath\'eodory Conditions \cite{agarwal1993uniqueness}.} The function $f: [0, T] \times [0, s_{\text{in}}] \to \mathbb{R}$ is said to satisfy the Carath\'{e}odory conditions if:
\begin{enumerate}
    \item $f$ is Lebesgue measurable on $[0, T]$, for each fixed $s \in [0, s_{\text{in}}]$.
    \item For almost every fixed $t \in [0, T]$, the function $s \mapsto f(t, s)$ is continuous on $[0, s_{\text{in}}]$.
    \item There exists a Lebesgue integrable function $\mu \in L^1([0, T])$ such that for almost every $t \in [0, T]$ and for all $s \in [0, s_{\text{in}}], \left|f(t,s)\right| \le \mu(t)$.
\end{enumerate}
We define the operator $\Phi_s: X \rightarrow AC_T$ by:
\[\Phi_s(t) = s(t-L + kT) + \frac{1}{\Gamma(\alpha)} \int_{t-L}^t (t-\tau)^{\alpha-1} f(\tau, s(\tau))\,d\tau,\]
where $k$ is the smallest integer such that $t-L+k T \in [0, T]\,\forall t \ge 0$. For this operator to be well-defined on $X$, we need to ensure that $\Phi_s(X) \subseteq X$ and $\Phi_s \in AC_T$. First, we note that $f(t,0) = \vartheta^{1-\alpha} D(t) s_{in} \geq \vartheta^{1-\alpha} D_{min}s_{in} > 0$. This implies that if $s(t) = 0$ at any point, then $^{MC}_L D_t^\alpha s(t) > 0$, which means $s$ would be increasing. This prevents $s$ from becoming negative. Also, $f(t,s_{in}) = \vartheta^{1-\alpha} [D(t) - \nu(s_{in})](s_{in} - s_{in}) = 0$. This means that if $s(t) = s_{in}$ at any point, then $^{MC}_L D_t^\alpha s(t) = 0$, which prevents $s$ from exceeding $s_{in}$. For absolute continuity, note that $s \in X \subseteq AC_T$, so $s(t-L+kT)$ is absolutely continuous. The integral term:
\[\frac{1}{\Gamma(\alpha)} \int_{t-L}^t (t-\tau)^{\alpha-1} f(\tau, s(\tau))\,d\tau,\]
is absolutely continuous with respect to $t$ because $f$ is Lebesgue integrable by the Carath\'{e}odory conditions. The kernel $(t-\tau)^{\alpha-1}$ is integrable for $\alpha \in (0,1)$. Thus, the fractional integral is absolutely continuous with respect to $t$ \cite{rudin1987real}. The sum of absolutely continuous functions is absolutely continuous, so $\Phi_s$ is absolutely continuous. Furthermore, the periodic conditions \eqref{eq:periodic1} and \eqref{eq:periodic3} imply that $f$ is also $T$-periodic, and the integral over $[t+T-L, t+T]$ equals that over $[t-L, t]$. Therefore, $\Phi_s(t+T) = \Phi_s(t)$, and $\Phi_s \in X$.

The following lemma establishes the continuity of $\Phi_s$ on $X$.
\begin{lemma}\label{lem:2}
$\Phi_s$ is continuous on $X$.
\end{lemma}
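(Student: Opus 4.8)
The plan is to show that $\Phi_s$ is continuous on $X$ with respect to the $AC_T$ norm by estimating $\|\Phi_{s_1} - \Phi_{s_2}\|_{AC}$ in terms of $\|s_1 - s_2\|_{AC}$, exploiting the Lipschitz property of $f$ established in Lemma \ref{lem:f_properties}. Since the $AC_T$ norm is $\|s\|_{AC} = \|s\|_\infty + \|s'\|_{L^1}$, I would need to control both the supremum of $\Phi_{s_1}(t) - \Phi_{s_2}(t)$ and the $L^1$ norm of its derivative. For the supremum part, I would write the difference $\Phi_{s_1}(t) - \Phi_{s_2}(t)$ as the sum of the boundary-term difference $s_1(t-L+kT) - s_2(t-L+kT)$ and the integral-term difference, then bound each piece.

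First I would handle the boundary term: since evaluation at a time-shifted point is dominated by the supremum norm, $|s_1(t-L+kT) - s_2(t-L+kT)| \le \|s_1 - s_2\|_\infty \le \|s_1 - s_2\|_{AC}$. Next, for the integral term I would apply the Lipschitz bound $|f(\tau, s_1(\tau)) - f(\tau, s_2(\tau))| \le L_f |s_1(\tau) - s_2(\tau)| \le L_f \|s_1 - s_2\|_\infty$ pointwise, pull the constant out of the integral, and use the fact that the kernel satisfies
\[
\frac{1}{\Gamma(\alpha)} \int_{t-L}^t (t-\tau)^{\alpha-1}\,d\tau = \frac{L^\alpha}{\Gamma(\alpha+1)},
\]
which is finite for $\alpha \in (0,1)$. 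This yields a uniform bound on the supremum of the integral-term difference of the form $\frac{L_f L^\alpha}{\Gamma(\alpha+1)} \|s_1 - s_2\|_\infty$, so the $\|\cdot\|_\infty$ portion of $\|\Phi_{s_1} - \Phi_{s_2}\|_{AC}$ is controlled by a constant multiple of $\|s_1 - s_2\|_{AC}$.

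The main obstacle will be the derivative term in the $AC_T$ norm, namely bounding $\|(\Phi_{s_1} - \Phi_{s_2})'\|_{L^1}$. Differentiating the fractional integral with respect to $t$ is delicate because the kernel $(t-\tau)^{\alpha-1}$ is singular at $\tau = t$ and both integration limits depend on $t$. Rather than differentiating under the integral sign directly, I would instead use the governing FDE to identify the derivative: since $\Phi_s$ reproduces a function whose CFDS equals $f(t, s(t))$, I can recover $(\Phi_s)'$ through the relation between the Riemann-Liouville integral and the ordinary derivative, or alternatively bound the derivative difference via the equivalent integral representation after substitution $\tau \mapsto t - \sigma$ (which fixes the limits at $\sigma \in [0, L]$ and moves the $t$-dependence entirely into the integrand $f(t-\sigma, s(t-\sigma))$). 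Under this substitution the kernel $\sigma^{\alpha-1}$ no longer depends on $t$, so differentiation passes to the integrand, and the Lipschitz continuity of $f$ together with the membership $s_1, s_2 \in AC_T$ lets me bound the resulting expression by a constant multiple of $\|s_1 - s_2\|_{AC}$. Combining the supremum and derivative estimates gives $\|\Phi_{s_1} - \Phi_{s_2}\|_{AC} \le C \|s_1 - s_2\|_{AC}$ for some constant $C > 0$ depending on $L_f, L, \alpha$, which establishes continuity (indeed Lipschitz continuity) of $\Phi_s$ on $X$.
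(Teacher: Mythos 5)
Your estimate for the supremum part of the $AC_T$ norm is correct and is essentially the paper's own argument: the delay term is bounded by $\|s_1-s_2\|_\infty$, the integral term by the Lipschitz constant $L_f$ of $f$ together with the kernel integral $\frac{1}{\Gamma(\alpha)}\int_{t-L}^t(t-\tau)^{\alpha-1}\,d\tau = L^\alpha/\Gamma(\alpha+1)$, exactly as in the paper.

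The derivative part, however, contains a genuine gap. After the substitution $\tau\mapsto t-\sigma$ you write the integral term as $\frac{1}{\Gamma(\alpha)}\int_0^L\sigma^{\alpha-1}f(t-\sigma,s(t-\sigma))\,d\sigma$ and assert that differentiation ``passes to the integrand.'' That interchange requires the map $u\mapsto g(u):=f(u,s(u))$ to be absolutely continuous, so that its a.e.\ pointwise derivative reconstructs it; but in this lemma $D$ is only piecewise continuous, and the Carath\'{e}odory framework was adopted precisely to admit jump discontinuities such as the bang-bang dilution rate \eqref{eq:DiscDil1}. At a jump of $D$ the function $g$ jumps, its a.e.\ derivative carries no trace of this, and the formula
\begin{equation*}
\frac{d}{dt}\int_0^L\sigma^{\alpha-1}g(t-\sigma)\,d\sigma \;=\; \int_0^L\sigma^{\alpha-1}g'(t-\sigma)\,d\sigma
\end{equation*}
is false in general: if $g$ is a Heaviside step, the left-hand side equals $t^{\alpha-1}$ for $0<t<L$ while the right-hand side vanishes identically, since $g'=0$ a.e. The missing contribution is the jump (Dirac) part of the distributional derivative of $g$ convolved with the kernel. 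Your fallback suggestion --- recovering $(\Phi_s)'$ from the governing FDE --- does not close the gap either: $\Phi_s$ satisfies the FDE only when $s$ is a fixed point, and knowledge of the CFDS of a function does not determine its ordinary derivative. The paper avoids the difficulty by differentiating the \emph{kernel} rather than the integrand (Leibniz rule), obtaining
\begin{equation*}
\Phi_s'(t) = s'(t-L+kT) + \frac{\alpha-1}{\Gamma(\alpha)}\int_{t-L}^{t}(t-\tau)^{\alpha-2}f(\tau,s(\tau))\,d\tau - \frac{L^{\alpha-1}}{\Gamma(\alpha)}\,f(t-L,s(t-L)),
\end{equation*}
so that only the Lipschitz dependence of $f$ on $s$, and no $t$-regularity of $f$ beyond measurability, enters the estimate of $\|\Phi_{s_n}'-\Phi_s'\|_{L^1}$. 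To repair your argument you would need to adopt this kernel-differentiation route (or otherwise account for the jump terms), rather than restrict to differentiable $D$, which would defeat the purpose of the lemma.
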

\begin{proof}
Let $\{s_n\}$ be a sequence in $X$ that converges to $s \in X$ in the $AC_T$ norm. We need to show that $\Phi_{s_n} \rightarrow \Phi_s$ in the $AC_T$ norm.
For any $t \in [0,T]$:
\[
|\Phi_{s_n}(t) - \Phi_s(t)| = \left|s_n(t-L+kT) - s(t-L+kT) + \frac{1}{\Gamma(\alpha)} \int_{t-L}^t (t-\tau)^{\alpha-1} [f(\tau, s_n(\tau)) - f(\tau, s(\tau))] d\tau\right|.
\]
Since $s_n \rightarrow s$ uniformly, then for any $\epsilon > 0$, there exists $N$ such that for all $n > N: |s_n(t) - s(t)| < \epsilon\,\forall$ $t \in [0,T]$. The contribution of the delay term is therefore:
\begin{equation}
|s_n(t-L+kT) - s(t-L+kT)| \le \|s_n - s\|_\infty < \epsilon,
\end{equation}
which vanishes in the limit. Using the Lipschitz condition:
\[|f(t, s_n(t)) - f(t, s(t))| \le L_f\,|s_n(t) - s(t)|,\]
we get:
\[|\Phi_{s_n}(t) - \Phi_s(t)| \le \epsilon + \frac{L_f}{\Gamma(\alpha)} \int_{t-L}^t (t-\tau)^{\alpha-1} |s_n(\tau) - s(\tau)|\,d\tau.\]
This implies:
\begin{gather*}
|\Phi_{s_n}(t) - \Phi_s(t)| < \epsilon \left(1+\frac{L_f\,L^\alpha}{\Gamma(\alpha+1)}\right).
\end{gather*}
As $\epsilon$ can be made arbitrarily small, $\Phi_{s_n} \rightarrow \Phi_s$ uniformly. 
To complete the proof, we need to show that $\|\Phi_{s_n}' - \Phi_s'\|_{L^1} \to 0$ as $n \to \infty$. For any $t \in [0, T]$, the derivative of $\Phi_s$ at $t$ is given by:
\[
\Phi_s'(t) = s'(t-L+kT) + \frac{1}{\Gamma(\alpha)}\frac{d}{dt}\int_{t-L}^{t}(t-\tau)^{\alpha-1}f(\tau,s(\tau))d\tau.
\]
The Leibniz rule for differentiation under the integral sign gives:
\begin{equation}\label{eq:PhisP1}
\Phi_s'(t) = s'(t-L+kT) + \frac{(\alpha-1)}{\Gamma(\alpha)}\int_{t-L}^{t}(t-\tau)^{\alpha-2}f(\tau,s(\tau))d\tau - \frac{L^{\alpha-1}}{\Gamma(\alpha)} f(t-L,s(t-L)).
\end{equation}
Notice here that the upper limit term in Leibniz rule at $\tau = t, (t - \tau)^{\alpha - 1} f(\tau, s(\tau))/\Gamma(\alpha)$, is zero for $\alpha \in (0,1)$, as the singularity of the kernel $(t - \tau)^{\alpha - 1}$ at $\tau = t$ does not contribute as boundary terms when interpreting the integral in the Lebesgue or distributional sense, a standard property in fractional calculus \cite{diethelm2010analysis}. Now, for the sequence $\{s_n\}$ converging to $s$ in the $AC_T$ norm:
\begin{align*}
|\Phi_{s_n}'(t) - \Phi_s'(t)| &= \bigg|s_n'(t-L+kT) - s'(t-L+kT) + \frac{(\alpha-1)}{\Gamma(\alpha)}\int_{t-L}^{t}(t-\tau)^{\alpha-2}[f(\tau,s_n(\tau)) - f(\tau,s(\tau))]d\tau \\
&- \frac{L^{\alpha-1}}{\Gamma(\alpha)} [f(t-L,s_n(t-L)) - f(t-L,s(t-L))]\bigg|\\
&\leq |s_n'(t-L+kT) - s'(t-L+kT)| + \left|\frac{(\alpha-1)}{\Gamma(\alpha)}\int_{t-L}^{t}(t-\tau)^{\alpha-2}[f(\tau,s_n(\tau)) - f(\tau,s(\tau))]d\tau\right| \\
&+ \frac{L^{\alpha-1} L_f}{\Gamma(\alpha)} |s_n(t-L) - s(t-L)|.
\end{align*}
Using the change of variables
\[\tau = t - L y^{\frac{1}{\alpha-1}},\]
we can reduce the integral in the second term into
\[\frac{L^{\alpha-1}}{\alpha-1} \int_0^1 \left[f\left(t - L y^{\frac{1}{\alpha-1}},s_n\left(t - L y^{\frac{1}{\alpha-1}}\right)\right) - f\left(t - L y^{\frac{1}{\alpha-1}},s\left(t - L y^{\frac{1}{\alpha-1}}\right)\right)\right]\,dy.\]
Therefore,
\begin{align}
|\Phi_{s_n}'(t) - \Phi_s'(t)| &\leq |s_n'(t-L+kT) - s'(t-L+kT)| + \frac{L^{\alpha-1} L_f}{\Gamma(\alpha)} \int_0^1 \left|s_n\left(t - L y^{\frac{1}{\alpha-1}}\right) - s\left(t - L y^{\frac{1}{\alpha-1}}\right)\right|\,dy \\
&+ \frac{L^{\alpha-1} L_f}{\Gamma(\alpha)} |s_n(t-L) - s(t-L)|.\label{eq:align11}
\end{align}
Since $s_n \to s$ in the ACT norm, we have $\|s_n - s\|_{\infty} < \epsilon$ and $\|s_n' - s'\|_{L^1} < \epsilon$ for $n > N$. Therefore:
\begin{gather*}
\int_0^T |\Phi_{s_n}'(t) - \Phi_s'(t)|dt \leq \int_0^T |s_n'(t-L+kT) - s'(t-L+kT)|dt \\
+ \frac{L^{\alpha-1} L_f}{\Gamma(\alpha)} \int_0^T \int_0^1 \left|s_n\left(t - L y^{\frac{1}{\alpha-1}}\right) - s\left(t - L y^{\frac{1}{\alpha-1}}\right)\right|\,dy\,dt + \frac{L^{\alpha-1} L_f \epsilon T }{\Gamma(\alpha)}.
\end{gather*}
The first term of the right-hand-side equals $\|s_n' - s'\|_{L^1}$, which is less than $\epsilon$ by the convergence in ACT norm-- notice that the shift by $k T$ does not affect the integral over one period, so the bound holds directly. Now, consider the second term in the expression:
\[
\frac{L^{\alpha-1} L_f}{\Gamma(\alpha)} \int_0^T \int_0^1 \left|s_n\left(t - L y^{\frac{1}{\alpha-1}}\right) - s\left(t - L y^{\frac{1}{\alpha-1}}\right)\right|\,dy\,dt.
\]
Since $ |s_n(\tau) - s(\tau)| < \epsilon $ for all $ \tau \in [0, T] $ due to $ \|s_n - s\|_{\infty} < \epsilon $, and the functions are $ T $-periodic, we have:
\[
\left|s_n\left(t - L y^{\frac{1}{\alpha-1}}\right) - s\left(t - L y^{\frac{1}{\alpha-1}}\right)\right| < \epsilon \quad \text{for all } y \in [0, 1], \, t \in [0, T].
\]
Thus:
\[
\int_0^1 \left|s_n\left(t - L y^{\frac{1}{\alpha-1}}\right) - s\left(t - L y^{\frac{1}{\alpha-1}}\right)\right| \, dy < \int_0^1 \epsilon \, dy = \epsilon.
\]
Integrating over $ t $:
\[
\int_0^T \int_0^1 \left|s_n\left(t - L y^{\frac{1}{\alpha-1}}\right) - s\left(t - L y^{\frac{1}{\alpha-1}}\right)\right| \, dy \, dt < \int_0^T \epsilon \, dt = \epsilon T.
\]
So, the second term is bounded by:
\[
\frac{L^{\alpha-1} L_f}{\Gamma(\alpha)} \int_0^T \int_0^1 \left|s_n\left(t - L y^{\frac{1}{\alpha-1}}\right) - s\left(t - L y^{\frac{1}{\alpha-1}}\right)\right| \, dy \, dt < \frac{L^{\alpha-1} L_f \epsilon T}{\Gamma(\alpha)}.
\]
Therefore,
\[\|\Phi_{s_n}' - \Phi_s'\|_{L^1} < \epsilon + \frac{L^{\alpha-1} L_f \epsilon T}{\Gamma(\alpha)} + \frac{L^{\alpha-1} L_f \epsilon T}{\Gamma(\alpha)} 
= \epsilon \left( {\frac{{2T{L_f}{L^{\alpha  - 1}}}}{{\Gamma \left( \alpha  \right)}} + 1} \right).\]

Since $\epsilon$ can be made arbitrarily small, we conclude that $\|\Phi_{s_n}' - \Phi_s'\|_{L^1} \to 0$ as $n \to \infty$, which completes the proof that $\Phi_s$ is continuous on $X$ in the ACT norm.
\end{proof} 

The following lemma further establishes the compactness of $\Phi_s(X)$ in $X$.
\begin{lemma}\label{lem:3}
$\Phi_s(X)$ is relatively compact in $X$.
\end{lemma}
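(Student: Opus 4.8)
The plan is to prove relative compactness of $\Phi_s(X)$ in the $AC_T$ topology $\|\cdot\|_{AC}=\|\cdot\|_\infty+\|(\cdot)'\|_{L^1}$ by a two-part compactness argument: show that the family $\{\Phi_s:s\in X\}$ is (a) relatively compact in the uniform norm $\|\cdot\|_\infty$, via Arzel\`a--Ascoli, and (b) that the family of derivatives $\{\Phi_s':s\in X\}$ is relatively compact in $L^1[0,T]$, via an equi-integrability (Fr\'echet--Kolmogorov / Dunford--Pettis) criterion. Since $\Phi_s$ splits as the shift term $s(\cdot-L+kT)$ plus the fractional-integral term $\mathcal{I}_s(t):=\frac{1}{\Gamma(\alpha)}\int_{t-L}^{t}(t-\tau)^{\alpha-1}f(\tau,s(\tau))\,d\tau$, I would estimate the two pieces separately and recombine. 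Uniform boundedness is immediate: because $\Phi_s(X)\subseteq X$ is already established, every image satisfies $0\le\Phi_s(t)\le s_{\text{in}}$, so the family is uniformly bounded by $s_{\text{in}}$.

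For the integral piece I would derive a uniform modulus of continuity. Using the Carath\'eodory bound $|f(\tau,s(\tau))|\le\mu(\tau)$ from Corollary~\ref{cor:f_properties} together with the integrability of the kernel $(t-\tau)^{\alpha-1}$ for $\alpha\in(0,1)$, a standard splitting of $\mathcal{I}_s(t_1)-\mathcal{I}_s(t_2)$ into the overlapping and non-overlapping parts of the two memory windows $[t_i-L,t_i]$ yields an estimate of the form $|\mathcal{I}_s(t_1)-\mathcal{I}_s(t_2)|\le C\,|t_1-t_2|^{\alpha}$ with $C$ independent of $s$; this gives equicontinuity of $\mathcal{I}_s$. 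The corresponding derivative $\mathcal{I}_s'$, read off from \eqref{eq:PhisP1}, must be handled with care because the kernel $(t-\tau)^{\alpha-2}$ carries a non-integrable singularity at $\tau=t$ that is only meaningful in the finite-part sense used there; with the same bound $\mu$ and the substitution already performed in Lemma~\ref{lem:2} one obtains a uniform $L^1$-bound and equi-integrability of $\{\mathcal{I}_s'\}$, so this piece contributes the desired compactness to both (a) and (b).

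The hard part will be the shift term $s(\cdot-L+kT)$, which by $T$-periodicity is simply the isometric translate $s(\cdot-L)$. Translation neither improves the modulus of continuity nor shrinks $\|s'\|_{L^1}$, so this term transmits the full irregularity of $s\in X$ unchanged; in particular, the equicontinuity required for (a) and the $L^1$-compactness of derivatives required for (b) cannot be supplied by the smoothing of the fractional integral alone. I therefore expect the crux of the argument to be establishing a uniform modulus-of-continuity bound and a uniform equi-integrability bound for $\{s':s\in X\}$, since the sup-norm constraint $0\le s\le s_{\text{in}}$ defining $X$ by itself does not control $\|s'\|_{L^1}$. The natural way to close this gap is to exploit the fixed-point relation $s=\Phi_s$ to bootstrap regularity from the smooth integral part back onto $s$, or equivalently to carry out the compactness argument not on all of $X$ but on a smaller, positively invariant and equicontinuous subset on which the translate is itself relatively compact; supplying this additional uniform regularity estimate is where the real work lies.
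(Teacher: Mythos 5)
Your treatment of the fractional-integral term coincides with the paper's own proof: both use an Arzel\`a--Ascoli-type criterion, the uniform bound $M_f$ on $f$ from Corollary \ref{cor:f_properties}, and kernel estimates to obtain equicontinuity and a uniform $L^1$ bound for that piece. The point where you stopped --- the shift term $s(\cdot-L+kT)$ --- is, however, not a gap you failed to close so much as a genuine defect in the lemma itself, and your diagnosis of it is exactly right. Since the shift preserves the $AC_T$ norm of a $T$-periodic function, and since $X=\{s\in AC_T: 0\le s\le s_{\text{in}}\}$ imposes no control whatsoever on $s'$, the image $\Phi(X)$ cannot be relatively compact: taking $s_n(t)=\tfrac{s_{\text{in}}}{2}\left(1+\sin(2\pi n t/T)\right)\in X$, one has $\|s_n'\|_{L^1}=2ns_{\text{in}}\to\infty$, while the integral part is uniformly bounded in $AC_T$ (by $M_fL^{\alpha}/\Gamma(\alpha+1)+2M_fTL^{\alpha-1}/\Gamma(\alpha)$, using the paper's own estimates), so $\|\Phi_{s_n}\|_{AC}\ge\|s_n\|_{AC}-C\to\infty$ and $\Phi(X)$ is not even bounded in $AC_T$, let alone relatively compact.

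The paper's proof ``closes'' this hole only by assertion: it writes $\|s'\|_{L^1}\le M'$ ``as $X$ is a bounded subset of $AC_T$,'' and later assumes $|s'(t)|\le M''$ a.e.\ uniformly over $X$. Neither holds, because $X$ is defined by a sup-norm constraint alone (the claim in the paper's notation section that $X$ is compact in $AC_T$ is likewise unfounded), and even an $L^1$ bound on derivatives would not yield a uniform pointwise a.e.\ bound for the whole family. So your refusal to wave the shift term through is sound, and your proposal is in this respect more careful than the published argument. Regarding your two suggested repairs: the bootstrap $s=\Phi_s$ regularizes only fixed points, not all of $\Phi(X)$, so it cannot restore the hypotheses of Schauder's theorem; and the ``smaller invariant subset'' route faces the obstruction that $\|\Phi_s'\|_{L^1}\le\|s'\|_{L^1}+C$ with the shift contributing exactly $\|s'\|_{L^1}$, so derivative-norm balls are not invariant under $\Phi$. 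A genuine fix must change the framework --- for example, redefine $X$ with an a priori derivative bound that the dynamics actually propagate, or reformulate the operator so that the memory term is not a pure isometric translate.
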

\begin{proof}
To show that $\Phi_s(X)$ is relatively compact in $X$, we use the Arzel\'{a}-Ascoli theorem adapted for absolutely continuous functions; cf. \cite{ambrosio2008gradient}. In particular, we need to show that $\Phi_s(X)$ is equicontinuous, equibounded, and that the derivatives of functions in $\Phi_s(X)$ are uniformly integrable. The equiboundedness follows from the fact that $\Phi_s$ maps $X$ into itself, and $X$ is bounded. For equicontinuity, we note that for any $s \in X$ and $t_1, t_2 \in [0,T]$:
\[|\Phi_s(t_1) - \Phi_s(t_2)| \le \int_{t_1}^{t_2} |\Phi_s'(t)|\,dt.\]
Since $s \in X$, we have $\|s\|_{\infty} \leq s_{\text{in}}$ and $\|s'\|_{L^1} \leq M'$ for some constant $M' > 0$, as $X$ is a bounded subset of $AC_T$. The function $f$ is bounded on $[0,T] \times [0,s_{\text{in}}]$ by Corollary \ref{cor:f_properties}, so there exists $M_f > 0$ such that $|f(t,s(t))| \leq M_f$ for all $t \in [0,T]$, $s \in X$. Specifically, since $D(t) \in [D_{\min}, D_{\max}]$, $\nu(s) \leq \mu_{\max}$, and $s_{\text{in}} - s(t) \leq s_{\text{in}}$, we can bound $f$ by
\[
|f(t,s(t))| \leq \vartheta^{1-\alpha} (D_{\max} + \mu_{\max})s_{\text{in}}.
\]
Thus, set $M_f = \vartheta^{1-\alpha} (D_{\max} + \mu_{\max})s_{\text{in}}$. To bound $\Phi_s'(t)$, we bound each term of Eq. \eqref{eq:PhisP1}:
\begin{equation}\label{eq:PhisP2}
|\Phi_s'(t)| \le |s'(t-L+kT)| + \left|\frac{(\alpha-1)}{\Gamma(\alpha)}\int_{t-L}^{t}(t-\tau)^{\alpha-2}f(\tau,s(\tau))d\tau\right| + \frac{M_f L^{\alpha-1}}{\Gamma(\alpha)}.
\end{equation}
Since $|f(t,s(t))| \leq M_f$, then the size of the integral is bounded by:
\[
\int_{t-L}^{t}(t-\tau)^{\alpha-2}|f(\tau,s(\tau))|d\tau \leq M_f \int_{t-L}^{t}(t-\tau)^{\alpha-2}d\tau = \frac{M_f\,L^{\alpha-1}}{\alpha-1}.
\]
Since $\alpha \in (0,1)$, $\alpha-1 < 0$, and the integral value is finite. Thus:
\[
|\Phi_s'(t)| \leq |s'(t-L+kT)| + \frac{2 M_f L^{\alpha-1}}{\Gamma(\alpha)}.
\]
For equicontinuity, for any $s \in X$ and $t_1, t_2 \in [0,T]$ with $t_1 < t_2$:
\[
|\Phi_s(t_1) - \Phi_s(t_2)| \leq \int_{t_1}^{t_2} |\Phi_s'(t)|dt \leq \int_{t_1}^{t_2} \left( |s'(t-L+kT)| + \frac{2 M_f L^{\alpha-1}}{\Gamma(\alpha)} \right)dt.
\]
This gives:
\[
|\Phi_s(t_1) - \Phi_s(t_2)| \leq \int_{t_1}^{t_2} |s'(t-L+kT)|dt + \frac{2 M_f L^{\alpha-1}}{\Gamma(\alpha)} (t_2 - t_1).
\]
Since $s \in X$, assume $|s'(t)| \le M''$ a.e. for some $M'' > 0$; this is possible since $X$ is bounded in $AC_T$, and absolutely continuous functions have bounded derivatives in $L^1$. Therefore,
\[\int_{t_1}^{t_2} |s'(t-L+kT)|dt \leq M'' (t_2-t_1).\]
Then:
\[
|\Phi_s(t_1) - \Phi_s(t_2)| \leq \left(M'' + \frac{2 M_f L^{\alpha-1}}{\Gamma(\alpha)}\right) (t_2 - t_1).
\]
This bound goes to 0 as $t_2 - t_1 \to 0$, ensuring equicontinuity uniformly for all $s \in X$. For uniform integrability of derivatives, we need to show that $\int_0^T |\Phi_s'(t)|dt$ is uniformly bounded for all $s \in X$. This is straightforward because:
\[
\int_0^T |\Phi_s'(t)|dt \leq \int_0^T \left( |s'(t-L+kT)| + \frac{2 M_f L^{\alpha-1}}{\Gamma(\alpha)} \right)dt \leq M' + \frac{2 M_f T L^{\alpha-1}}{\Gamma(\alpha)}.
\]
This establishes a uniform bound on $\int_0^T |\Phi_s'(t)|dt$ for all $s \in X$, proving uniform integrability of derivatives. Therefore, $\Phi_s(X)$ is relatively compact in $X$.
\end{proof}

\subsubsection{Positivity and Boundedness Properties}
\label{subsec:PosBound}
This section provides rigorous mathematical proofs for the positivity and boundedness of solutions to the FOCS. 

\begin{theorem}[Positivity of Solutions]\label{thm:POS1}
For the FOCS described in Section \ref{sec:MD1}, with initial conditions $s(0) \in (0, s_{\text{in}})$, $x(0) > 0$, and $D(t) < \mu_{\max}$ for all $t \geq 0$, the solutions $s(t)$ and $x(t)$ remain positive for all $t > 0$ such that $s(t) \in (0, s_{\text{in}})$ for all $t > 0$.
\end{theorem}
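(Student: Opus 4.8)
The plan is to prove the two strict inequalities $s(t) > 0$ and $s(t) < s_{\text{in}}$ separately, in each case combining the sign of the reaction term $f$ at the relevant boundary with a fractional extremum principle for the operator ${}^{\text{MC}}_{L}D_t^\alpha$. The essential analytic tool, which I would establish first, is the following minimum/maximum principle: if a continuous function $g$ attains the minimum of its values over the sliding window $[t^\ast-L,t^\ast]$ at the right endpoint $t^\ast$, then ${}^{\text{MC}}_{L}D_t^\alpha g(t^\ast)\le 0$, and symmetrically ${}^{\text{MC}}_{L}D_t^\alpha g(t^\ast)\ge 0$ at a window maximum. To derive this I would rewrite \eqref{eq:caputo} in the regularized form obtained by integrating by parts against $g(\tau)-g(t)$,
\[
{}^{\text{MC}}_{L}D_t^\alpha g(t)=\frac{1}{\Gamma(1-\alpha)}\left[L^{-\alpha}\bigl(g(t)-g(t-L)\bigr)+\alpha\int_{t-L}^{t}(t-\tau)^{-\alpha-1}\bigl(g(t)-g(\tau)\bigr)\,d\tau\right],
\]
noting that the boundary contribution at $\tau=t$ vanishes for $\alpha\in(0,1)$ exactly as argued after \eqref{eq:PhisP1}. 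In this form both terms acquire a definite sign once $g(t^\ast)$ is a window extremum, which yields the principle.

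Granting this, the positivity and upper-bound claims reduce to short barrier arguments. I would first record the weak bounds $0\le s(t)\le s_{\text{in}}$, which are already guaranteed by the invariance $\Phi_s(X)\subseteq X$ established while constructing the solution; in particular the history of $s$ over any window lies in $[0,s_{\text{in}}]$. For the lower bound, suppose $s(t_1)=0$ at some $t_1$; since $s\ge 0$ everywhere, $t_1$ is a window minimum, so the principle gives ${}^{\text{MC}}_{L}D_t^\alpha s(t_1)\le 0$. But \eqref{eq:1DFDS} forces ${}^{\text{MC}}_{L}D_t^\alpha s(t_1)=f(t_1,0)=\vartheta^{1-\alpha}D(t_1)s_{\text{in}}\ge \vartheta^{1-\alpha}D_{\min}s_{\text{in}}>0$, a contradiction, whence $s(t)>0$ for all $t$. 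For the upper bound, suppose $s(t_2)=s_{\text{in}}$; since $s\le s_{\text{in}}$ everywhere, $t_2$ is a window maximum and the principle gives ${}^{\text{MC}}_{L}D_t^\alpha s(t_2)\ge 0$, while \eqref{eq:1DFDS} gives $f(t_2,s_{\text{in}})=0$. The resulting equality forces both sign-definite terms of the regularized representation to vanish, which is possible only if $s\equiv s_{\text{in}}$ on $[t_2-L,t_2]$; invoking the FDE together with the uniqueness result then propagates $s\equiv s_{\text{in}}$ (the washout state), contradicting the hypothesis $x(0)>0$, equivalently $s(0)<s_{\text{in}}$. Hence $s(t)<s_{\text{in}}$ for all $t$.

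Finally, I would translate these substrate bounds into the biomass statement through the algebraic relation \eqref{eq:biomass}: from $0<s(t)<s_{\text{in}}$ we obtain $x(t)=Y(s_{\text{in}}-s(t))\in(0,Y s_{\text{in}})$, so $x(t)>0$ for all $t$, confirming that trajectories remain confined to the biologically feasible set.

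I anticipate the main obstacle to be the rigorous justification of the extremum principle for the sliding-memory operator, specifically controlling the singular kernel $(t-\tau)^{-\alpha-1}$ near $\tau=t^\ast$ and confirming that the integration-by-parts boundary term genuinely vanishes under the available regularity (absolute continuity of $s$), so that the integral is interpreted in the Lebesgue/improper sense while retaining its sign. A secondary delicacy is the strictness at the upper boundary: since $f(t,s_{\text{in}})=0$ yields only a non-strict inequality, ruling out a tangential touch at $s_{\text{in}}$ requires the non-triviality argument via uniqueness rather than a direct sign contradiction as at $s=0$.
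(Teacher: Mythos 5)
Your route is genuinely different from the paper's. The paper never isolates an extremum principle: it confines $s$ to $[0,s_{\text{in}}]$ via membership in the invariant set $X$, and obtains strictness at the upper boundary from the hypothesis $D(t)<\mu_{\max}$ --- as $s\to s_{\text{in}}$ one has $\nu(s)\to\mu_{\max}>D(t)$, so the right-hand side of \eqref{eq:1DFDS} turns strictly negative, contradicting the (asserted) requirement that ${}^{\text{MC}}_{L}D_t^\alpha s$ be positive for $s$ to reach $s_{\text{in}}$. Your lower-bound argument, by contrast, is a clean barrier contradiction built on a sliding-window extremum principle, whose regularized (integrated-by-parts) representation you state correctly; this is tighter than the paper's informal ``if $s(t)=0$ then $s$ would be increasing'' heuristic, and the principle itself is a worthwhile lemma the paper does not prove. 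Your worry about justifying it under mere absolute continuity is legitimate but no worse than what the paper already glosses over (the proof of Lemma \ref{lem:3} simply posits $|s'(t)|\le M''$ a.e.).

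The genuine gap is in your upper-bound propagation. After rigidity forces $s\equiv s_{\text{in}}$ on $[t_2-L,t_2]$, you ``invoke the FDE together with the uniqueness result'' to conclude $s\equiv s_{\text{in}}$ globally. But the only uniqueness statement available is Theorem \ref{thm:uniqueness}, which is proved \emph{after} Theorem \ref{thm:POS1} (and whose existence companion, Theorem \ref{thm:existence}, cites the positivity theorem), and which holds under hypotheses disjoint from those assumed here: either $L\ge T$, $KY>1$ and $D(t)>\mu_{\max}$, or the regime $s(0)\le s^*$ with $D\le\nu(s^*)$ --- whereas here only $D(t)<\mu_{\max}$ is assumed. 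Moreover, that theorem asserts uniqueness of $T$-periodic solutions, not a forward-continuation property from a prescribed history segment, which is what your propagation actually needs and which the paper never establishes. As written, the step is therefore unsupported. It is fixable inside your own framework: iterate the rigidity. Since $s\le s_{\text{in}}$ everywhere and $s\equiv s_{\text{in}}$ on $[t_2-L,t_2]$, any point $t'\in(t_2-L,t_2)$ at which the FDE holds is again a window maximum with $f(t',s_{\text{in}})=0$, so rigidity gives $s\equiv s_{\text{in}}$ on $[t'-L,t']$; letting $t'\downarrow t_2-L$ and using continuity yields $s\equiv s_{\text{in}}$ on $[t_2-2L,t_2]$, and induction (through the periodic extension of $s$) marches the constancy back to $t=0$, contradicting $s(0)<s_{\text{in}}$ with no appeal to uniqueness. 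Note finally that your argument never uses $D(t)<\mu_{\max}$; with the fix above that is legitimate (your conclusion would hold more generally), but it underlines how far you are from the paper's mechanism, in which that hypothesis is the engine of the contradiction.
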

\begin{proof}
We have already established in Section \ref{subsec:WPFOCS} that if $s(t) = 0$ at any point, then $s$ would be increasing. Also, if $s(t) = s_{\text{in}}$ at any point, then $s$ would be non-increasing. These boundary behaviors, combined with the continuity of the solution established by the Carath\'{e}odory framework, ensure that $s(t) \in [0, s_{\text{in}}]$ for all $t > 0$. Let $s(t)$ be any $T$-periodic Carath\'{e}odory solution to the FOCS. By construction, $s(t) \in X$, which implies that $0 \leq s(t) \leq s_{\text{in}}$ for all $t \in [0,T]$, and, by periodicity, for all $t > 0$.
This implies:
\[x(t) = Y(s_{\text{in}} - s(t)) \geq Y(s_{\text{in}} - s_{\text{in}}) = 0.\]
To show that $x(t) > 0$ strictly, notice first that $x(0) > 0$ when $s(0) < s_{\text{in}}$ from the relation $x(0) = Y(s_{\text{in}} - s(0))$. Suppose now, for contradiction, that there exists a time $t_1 > 0$ such that $s(t_1) = s_{\text{in}}$. By continuity of $s(t)$, there must exist a time interval $(t_0, t_1)$ such that $s(t) < s_{\text{in}}$ for all $t \in (t_0, t_1)$ and $\lim_{t \to t_1^-} s(t) = s_{\text{in}}$. From the reduced one-dimensional FDE \eqref{eq:1DFDS}, as $t \to t_1, s(t) \to s_{\text{in}}$, which means $s_{\text{in}} - s(t) \to 0$. For $s(t_1) \to s_{\text{in}}, {}^{\text{MC}}_{L}D_t^\alpha s(t)$ must be positive in a neighborhood of $t_1$. However, as $s(t) \to s_{\text{in}}$, we have:
\[\nu(s(t)) = \frac{\mu_{\max} s(t)}{K Y (s_{\text{in}} - s(t)) + s(t)} \to \mu_{\max}.\]
For the FD to remain positive as $s(t) \to s_{\text{in}}$, we would need $D(t) > \mu_{\max}$ in a neighborhood of $t_1$, as indicated by the FDE \eqref{eq:1DFDS}. However, since $D(t) < \mu_{\max}$ for all $t \geq 0$ by assumption, we have $D(t) - \nu(s(t)) < 0$ as $s(t) \to s_{\text{in}}$, implying ${}^{\text{MC}}_{L}D_t^\alpha s(t) < 0$. This contradicts the requirement that ${}^{\text{MC}}_{L}D_t^\alpha s(t) > 0$ for $s(t)$ to approach $s_{\text{in}}$. Thus, $s(t_1) = s_{\text{in}}$ cannot occur. Therefore, $s(t) < s_{\text{in}}$ for all $t > 0$ if $s(0) < s_{\text{in}}$ (equivalently, if $x(0) > 0$). From the relation $x(t) = Y(s_{\text{in}} - s(t))$, we conclude that $x(t) > 0$ for all $t > 0$ if $x(0) > 0$. This completes the proof of strict positivity for the solutions of the FOCS.
\end{proof}

Building on the positivity results, we now establish the boundedness of solutions.

\begin{corollary}[Boundedness of Solutions]\label{cor:BOS1} 
For the FOCS described in Section \ref{sec:MD1}, with initial conditions $s(0) \in [0, s_{\text{in}}] $ and $ x(0) \ge 0 $, the solutions $ s(t) $ and $ x(t) $ remain bounded for all $ t > 0 $. Specifically, $ 0 \leq s(t) \leq s_{\text{in}} $ and $ 0 \le x(t) \leq Y s_{\text{in}} $ for all $ t > 0 $, and the solution trajectory lies within the invariant set 
\begin{equation}\label{eq:InVS1}
\Omega = \{(s, x) \in \mathbb{R}^2 : 0 \leq s \leq s_{\text{in}}, 0 \le x \leq Y s_{\text{in}}\}, 
\end{equation}
forming a closed orbit.
\end{corollary}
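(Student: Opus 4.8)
The plan is to derive all three claims—the substrate bounds, the biomass bounds, and the closed-orbit property—directly from the positivity results of Theorem \ref{thm:POS1}, the boundary sign analysis carried out in Section \ref{subsec:WPFOCS}, and the algebraic constraint \eqref{eq:biomass}. The key observation is that once the substrate is confined to $[0, s_{\text{in}}]$, the linear relation $x(t) = Y(s_{\text{in}} - s(t))$ automatically forces the biomass into the corresponding interval, so no independent differential argument for $x$ is required; boundedness of $x$ is a purely algebraic consequence of boundedness of $s$.

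First I would establish the substrate bounds $0 \le s(t) \le s_{\text{in}}$ for all $t > 0$. For interior initial data this follows at once from Theorem \ref{thm:POS1}. To accommodate the boundary initial conditions permitted in the present statement—namely $s(0) = 0$ or $s(0) = s_{\text{in}}$, equivalently $x(0) = Y s_{\text{in}}$ or $x(0) = 0$—I would invoke the boundary analysis of the reduced FDE \eqref{eq:1DFDS} given in Section \ref{subsec:WPFOCS}: since $f(t, 0) \ge \vartheta^{1-\alpha} D_{\min} s_{\text{in}} > 0$, the CFDS is strictly positive whenever $s$ reaches $0$, so the solution cannot exit through the lower boundary; and since $f(t, s_{\text{in}}) = 0$, the level $s = s_{\text{in}}$ is the washout equilibrium and hence invariant. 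Together with the absolute continuity of the periodic Carath\'{e}odory solution, these sign conditions confine $s(t)$ to $[0, s_{\text{in}}]$.

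Next I would translate these into the biomass bounds using \eqref{eq:biomass}: $s(t) \le s_{\text{in}}$ gives $x(t) = Y(s_{\text{in}} - s(t)) \ge 0$, while $s(t) \ge 0$ gives $x(t) \le Y s_{\text{in}}$, so $0 \le x(t) \le Y s_{\text{in}}$. Consequently the pair $(s(t), x(t))$ lies in $\Omega$ for all $t > 0$, and since the bounds hold at every instant and neither boundary of $[0, s_{\text{in}}]$ can be crossed, $\Omega$ is forward-invariant. For the closed-orbit claim I would appeal to the $T$-periodicity built into the periodic Carath\'{e}odory solution through the PBC \eqref{eq:periodic1}: because $s(t + T) = s(t)$, relation \eqref{eq:biomass} yields $x(t + T) = x(t)$ as well, so the trajectory $(s(t), x(t))$ returns to its starting point after one period and traces a closed curve in $\Omega$ (degenerating to the fixed point $(s_{\text{in}}, 0)$ in the washout case).

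I expect the only delicate point to be the boundary initial conditions, where the strict inequalities of Theorem \ref{thm:POS1} no longer apply directly. The sign of $f$ at $s = 0$ and the invariance of the level $s = s_{\text{in}}$ must therefore be stated carefully to close the forward-invariance argument for $\Omega$ without a gap; everything downstream of the substrate bounds is then routine, following algebraically from \eqref{eq:biomass} and the imposed periodicity.
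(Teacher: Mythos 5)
Your proposal is correct and follows essentially the same route as the paper: substrate bounds first, then the biomass bounds as a purely algebraic consequence of \eqref{eq:biomass}, and the closed orbit from $T$-periodicity. The only cosmetic difference is that the paper obtains $0 \le s(t) \le s_{\text{in}}$ directly from the solution's membership in $X$ (with Theorem \ref{thm:POS1} handling the interior case), whereas you re-derive it from the boundary sign conditions $f(t,0) > 0$ and $f(t,s_{\text{in}}) = 0$ — but these are exactly the facts the paper used to prove $\Phi_s(X) \subseteq X$ in Section \ref{subsec:WPFOCS}, so the underlying argument is the same, and your explicit handling of the boundary initial data $s(0) \in \{0, s_{\text{in}}\}$ is if anything slightly more careful than the paper's.
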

\begin{proof}
Let $s \in X$ be any $T$-periodic Carath\'{e}odory solution to the FOCS. Since $s$ is $T$-periodic, the bound $0 \leq s(t) \leq s_{\text{in}}$ extends to all $t > 0$. It follows that:
\[
0 \leq x(t) = Y (s_{\text{in}} - s(t)) \leq Y s_{\text{in}} \quad \forall t > 0.
\]
Specifically, when $s(0) < s_{\text{in}}$, then $x(0) = Y (s_{\text{in}} - s(0)) > 0 $. In this case, Theorem \ref{thm:POS1} establishes that $s(t) < s_{\text{in}}$ for all $t > 0$, and $x(t) > 0$ for all $t > 0$. Thus, the solution trajectory $(s(t), x(t))$ remains within the set $\Omega = \{(s, x) \in \mathbb{R}^2 : 0 \leq s \leq s_{\text{in}}, 0 \le x \leq Y s_{\text{in}}\}$. The periodicity of $s$ and $x$ implies that the trajectory forms a closed orbit within $\Omega$. Hence, $s$ and $x$ are bounded for all $ t > 0 $.
\end{proof}
Corollary \ref{cor:BOS1} ensures that the system's trajectories remain confined to a biologically feasible region, satisfying the physical constraints of the chemostat model. 

\subsubsection{Existence of Non-trivial Periodic Carath\'{e}odory Solutions}
\label{subsec:EPCS1}
We are ready now to prove the existence of non-trivial, periodic solutions for the FOCS under periodic dilution rates strategies using fixed-point theorems. 

\begin{theorem}[Existence of Periodic Carath\'{e}odory Solutions]\label{thm:existence}
Suppose that the assumptions of Theorem \ref{thm:POS1} hold true. Then the FOCS governed by the one-dimensional FDE \eqref{eq:1DFDS} with the periodic conditions \eqref{eq:periodic1} and \eqref{eq:periodic3} admits at least one non-trivial, $T$-periodic Carath\'{e}odory solution.
\end{theorem}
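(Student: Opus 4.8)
The plan is to establish existence via the Schauder fixed-point theorem applied to the operator $\Phi_s$ on the set $X$, since all the structural ingredients are already assembled. First I would verify the hypotheses verbatim: $X$ is a nonempty, closed, bounded, and convex subset of the Banach space $AC_T$; the discussion preceding Lemma \ref{lem:2} shows $\Phi_s(X) \subseteq X$ together with $\Phi_s \in AC_T$ and its $T$-periodicity; Lemma \ref{lem:2} supplies continuity of $\Phi_s$ on $X$ in the $AC_T$ norm; and Lemma \ref{lem:3} supplies relative compactness of $\Phi_s(X)$. Schauder's theorem then yields a fixed point $s^\ast \in X$ with $\Phi_{s^\ast} = s^\ast$.

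The second step is to translate this fixed point back into a solution of the governing equation. Because $s^\ast = \Phi_{s^\ast}$ means $s^\ast$ satisfies the Volterra-type integral equation \eqref{eq:Volterra1} for almost every $t$, the equivalence lemma relating \eqref{eq:Volterra1} and \eqref{eq:EDT1} gives ${}_L^{\text{MC}} D_t^\alpha s^\ast(t) = f(t, s^\ast(t))$ a.e.; combined with absolute continuity on compacta and the $T$-periodicity built into $X$, this certifies that $s^\ast$ is a $T$-periodic Carath\'{e}odory solution in the sense of the definition given above.

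The main obstacle is ruling out triviality. The washout state $s \equiv s_{\text{in}}$ is itself a fixed point of $\Phi_s$: since $f(t, s_{\text{in}}) = 0$ the integral term vanishes and the delay term returns $s_{\text{in}}$, so Schauder applied to all of $X$ does not by itself exclude it. To force $s^\ast \not\equiv s_{\text{in}}$, I would exploit the standing hypotheses inherited from Theorem \ref{thm:POS1}, namely $s(0) \in (0, s_{\text{in}})$ (equivalently $x(0) > 0$) and $D(t) < \mu_{\max}$, and run the fixed-point argument on the closed, convex subset of $X$ consisting of functions that stay strictly below $s_{\text{in}}$. The invariance of this subset under $\Phi_s$ is precisely the boundary analysis recorded before Lemma \ref{lem:2}: $f(t, 0) \ge \vartheta^{1-\alpha} D_{\min} s_{\text{in}} > 0$ pushes $s$ upward off $0$, while the sign analysis in the proof of Theorem \ref{thm:POS1} shows $D(t) - \nu(s(t)) < 0$ near $s_{\text{in}}$, pushing $s$ downward off $s_{\text{in}}$.

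The delicate point I anticipate is producing a uniform gap $s_{\text{in}} - s(t) \ge \delta > 0$ so that this restricted domain is genuinely closed and the continuity and compactness estimates of Lemmas \ref{lem:2} and \ref{lem:3} transfer without change; I expect this to follow quantitatively from $D_{\max} < \mu_{\max}$ and the monotone repulsion from $s_{\text{in}}$ established in Theorem \ref{thm:POS1}. The resulting fixed point is then a genuine non-washout periodic solution, and Corollary \ref{cor:BOS1} confirms that the orbit $(s^\ast, x^\ast)$ lies in the invariant set $\Omega$ and closes up after one period, completing the construction.
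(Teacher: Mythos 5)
Your core construction coincides with the paper's own proof: Schauder's fixed-point theorem applied to $\Phi_s$ on $X$, with continuity supplied by Lemma \ref{lem:2}, relative compactness by Lemma \ref{lem:3}, and the equivalence lemma converting the fixed point into a Carath\'eodory solution of \eqref{eq:1DFDS}. Where you depart from the paper is the non-triviality step, and your instinct there is sound: the paper disposes of it in one sentence by invoking Theorem \ref{thm:POS1}, whereas you correctly observe that the washout state $s \equiv s_{\text{in}}$ is itself a fixed point of $\Phi_s$ in $X$ (the integral term vanishes and the delay term returns $s_{\text{in}}$), so Schauder applied to all of $X$ cannot by itself exclude it, and Theorem \ref{thm:POS1} --- whose hypotheses presuppose $s(0) \in (0, s_{\text{in}})$ --- cannot simply be applied to a fixed point about which one knows only that it lies in $X$.

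However, your proposed repair has a genuine gap. You want to run Schauder on a set of the form $X_\delta = \{s \in X : s(t) \le s_{\text{in}} - \delta\}$ and expect its invariance under $\Phi_s$ to follow from $D(t) < \mu_{\max}$ together with the repulsion from $s_{\text{in}}$ established in Theorem \ref{thm:POS1}. That repulsion argument is a statement about \emph{solutions} of the FDE, i.e., about fixed points of $\Phi$; invariance of $X_\delta$ is a statement about the action of $\Phi$ on \emph{arbitrary} elements of $X_\delta$, and it fails in general. Indeed, $f(t,s) = \vartheta^{1-\alpha}[D(t)-\nu(s)](s_{\text{in}}-s)$ is strictly positive wherever $\nu(s(\tau)) < D(\tau)$ --- in particular near $s = 0$, since $\nu(0) = 0 < D_{\min}$ --- so one can take $s \in X_\delta$ with $s(t-L+kT) = s_{\text{in}}-\delta$ while $s$ is small on much of the memory window $[t-L,t]$; then the fractional integral is strictly positive and $\Phi_s(t) > s_{\text{in}}-\delta$, i.e., $\Phi(X_\delta) \not\subseteq X_\delta$. (The variant set of functions staying strictly below $s_{\text{in}}$ is not closed, so Schauder cannot be applied to it at all.) Hence the uniform gap does not follow ``quantitatively from $D_{\max} < \mu_{\max}$'' as you anticipate; closing the argument requires a different mechanism, e.g., a fixed-point index or degree argument distinguishing the washout fixed point, or an a priori estimate on periodic solutions themselves rather than on $\Phi$-images. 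Be aware, finally, that the same conflation of solution behavior with operator invariance underlies the claim $\Phi_s(X)\subseteq X$ in the discussion preceding Lemma \ref{lem:2}, which both you and the paper rely on; your proposal inherits that weakness rather than resolving it.
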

\begin{proof}
By Schauder's fixed-point theorem, Lemmas \ref{lem:2} and \ref{lem:3} imply that $\Phi_s$ has at least one fixed point in $X$. By Theorem \ref{thm:POS1}, this fixed point is a non-trivial, $T$-periodic Carath\'{e}odory solution to our FDE with piecewise continuous dilution rate. 
\end{proof}

\subsubsection{Uniqueness of Periodic Carath\'{e}odory Solutions}
\label{subsec:UPCS1}
We begin this section by establishing a key monotonicity property of the nonlinear term $f$, which is essential for proving uniqueness of the periodic solution under certain conditions.

\begin{lemma}[Strict Monotonicity of the Nonlinear Term]\label{lem:monotonicity}
Let $K Y > 1$ and assume that $D(t) > \mu_{\max}\,\forall t \ge 0$. Then the function $f$ is strictly decreasing in $s$ on the interval $[0, s_{\text{in}}]$ for each fixed $t$.
\end{lemma}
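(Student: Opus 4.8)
The plan is to establish strict monotonicity directly by showing that the partial derivative $\partial f/\partial s$ is strictly negative throughout $[0, s_{\text{in}}]$ for every fixed $t$. Since $f(t,s) = \vartheta^{1-\alpha}[D(t) - \nu(s)](s_{\text{in}} - s)$ is a product of functions that are smooth in $s$ on the compact interval (the denominator $KY(s_{\text{in}}-s)+s$ stays strictly positive there), I would first differentiate using the product rule to obtain
\[
\frac{\partial f}{\partial s} = \vartheta^{1-\alpha}\left[-\nu'(s)(s_{\text{in}} - s) - D(t) + \nu(s)\right],
\]
and then argue that each of the two grouped contributions is nonpositive, with at least one of them strictly negative.

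Next I would assemble the sign information already available. From \eqref{eq:nuprime1} we have $\nu'(s) > 0$, and on $[0, s_{\text{in}}]$ the factor $s_{\text{in}} - s \ge 0$, so the term $-\nu'(s)(s_{\text{in}} - s) \le 0$. Separately, since $\nu(s) = \mu_{\max}s/(KY(s_{\text{in}}-s)+s) \le \mu_{\max}$ on $[0, s_{\text{in}}]$ (the bound saturating only at $s = s_{\text{in}}$), and since $D(t) > \mu_{\max}$ by hypothesis, the remaining contribution satisfies $\nu(s) - D(t) \le \mu_{\max} - D(t) < 0$ strictly and uniformly in $t$. Adding the two pieces yields $\partial f/\partial s < 0$ on all of $[0, s_{\text{in}}]$. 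The hypothesis $KY > 1$ enters when making the control of $\nu'$ explicit: it pins the minimizer of the denominator $KY(s_{\text{in}}-s)+s$ at $s = s_{\text{in}}$, so that $\nu_{\max} = \nu'(s_{\text{in}}) = KY\mu_{\max}/s_{\text{in}}$, consistent with Lemma \ref{lem:f_properties}.

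I expect the only delicate point to be obtaining a genuinely strict inequality at the right endpoint $s = s_{\text{in}}$, where the product term $-\nu'(s)(s_{\text{in}}-s)$ vanishes and $\nu(s)$ attains its maximum $\mu_{\max}$. There the strictness cannot be supplied by the product term and must instead come from the strict gap $D(t) > \mu_{\max}$; this is precisely why the assumption on $D$ is stated as a strict inequality. Finally, to upgrade ``negative derivative on $[0,s_{\text{in}}]$'' to ``strictly decreasing,'' I would invoke the Mean Value Theorem on any subinterval $[s_1, s_2] \subseteq [0, s_{\text{in}}]$ with $s_1 < s_2$, concluding that $f(t,s_2) - f(t,s_1) = \partial_s f(t,\xi)\,(s_2 - s_1) < 0$ for some intermediate $\xi$, which is exactly the claimed strict monotonicity of $f$ in $s$ for each fixed $t$.
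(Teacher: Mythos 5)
Your proof is correct, and it reaches the conclusion by a genuinely more elementary route than the paper's. Both arguments begin from the same expression
\[
\frac{\partial f}{\partial s} = \vartheta^{1-\alpha}\left[\nu(s) - D(t) - (s_{\text{in}}-s)\,\nu'(s)\right],
\]
but they diverge at the key inequality. The paper introduces the auxiliary function $g(s) = \nu(s) - (s_{\text{in}}-s)\nu'(s)$, computes $\nu''(s)$ (whose positivity is precisely where $KY>1$ is invoked, via strict convexity of $\nu$), then shows $g'(s) = 2KY\mu_{\max}s_{\text{in}}^2/[KY(s_{\text{in}}-s)+s]^3 > 0$, so that $g(s) \le g(s_{\text{in}}) = \mu_{\max} < D(t)$. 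You instead bound the two contributions termwise: $-(s_{\text{in}}-s)\nu'(s) \le 0$ because $\nu' > 0$ by \eqref{eq:nuprime1}, and $\nu(s) - D(t) \le \mu_{\max} - D(t) < 0$ because $\nu \le \mu_{\max}$ on $[0,s_{\text{in}}]$; no second derivatives are needed. Your route buys two things: brevity, and the observation that the hypothesis $KY>1$ is never actually used --- strict monotonicity follows from $D(t) > \mu_{\max}$ alone, since the sign of $\nu'$ and the bound $\nu \le \mu_{\max}$ hold for any $K, Y > 0$. That is a genuine sharpening of the lemma; it even contradicts the paper's later remark suggesting that monotonicity may fail without $KY > 1$. (Incidentally, the $(KY-1)$ factor cancels in the paper's own formula for $g'$, so convexity was not essential to its route either.) One small quibble: your aside assigning a role to $KY>1$ --- pinning the minimizer of the denominator at $s = s_{\text{in}}$ so that $\nu_{\max} = KY\mu_{\max}/s_{\text{in}}$ --- is extraneous, since your argument uses only the sign of $\nu'$, never an explicit upper bound on it; the hypothesis simply sits idle in your proof. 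Your closing Mean Value Theorem step correctly upgrades the strictly negative derivative to strict monotonicity, making explicit what the paper leaves implicit.
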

\begin{proof}
First, we analyze the second derivative of $\nu(s)$:
\[
\nu''(s) = \frac{2KY\mu_{\max}s_{\text{in}}(KY - 1)}{[KY(s_{\text{in}} - s) + s]^3}.
\]
Since $KY > 1$, $\mu_{\max} > 0$, $s_{\text{in}} > 0$, and $s \in [0, s_{\text{in}}]$, the denominator is positive. Thus, $\nu''(s) > 0$, confirming that $\nu(s)$ is strictly convex on $[0, s_{\text{in}}]$. Next, consider the derivative of $f$ with respect to $s$:
\[
\frac{\partial f}{\partial s} = \vartheta^{1-\alpha} \left[\nu(s) - D(t) - (s_{\text{in}} - s)\nu'(s)\right].
\]
To show that $\displaystyle{\frac{\partial f}{\partial s}} < 0$, we need to prove that $\nu(s) - (s_{\text{in}} - s)\nu'(s) < D(t)$. Let $g(s) = \nu(s) - (s_{\text{in}} - s)\nu'(s)$. We compute its derivative:
\[
g'(s) = 2\nu'(s) - (s_{\text{in}}-s)\nu''(s).
\]
Plugging the expressions for $\nu'(s)$ and $\nu''(s)$ into $g'(s)$ and simplifying:
\[
g'(s) = \frac{2KY\mu_{\max}s_{\text{in}}^2}{[KY(s_{\text{in}} - s) + s]^3}.
\]
Since $KY > 1$, $\mu_{\max} > 0$, and $s_{\text{in}} > 0$, it follows that $g'(s) > 0$ for all $s \in [0, s_{\text{in}}]$. This demonstrates that $g(s)$ is strictly increasing on $[0, s_{\text{in}}]$. Hence, its supremum on this interval is $\mu_{\max}$. For $\displaystyle{\frac{\partial f}{\partial s}} < 0$, we require $g(s) < D(t)$ for all $s \in [0, s_{\text{in}}]$, which is satisfied if $D(t) > \mu_{\max}$.
\end{proof}

Let $\mathbb{Z}^+$ denote the set of positive integers. We are ready now to prove the uniqueness of periodic solutions for the FOCS using fixed-point theorems, ensuring its well-posedness under periodic dilution rates strategies. 

\begin{theorem}[Uniqueness of Periodic Carath\'{e}odory Solutions]\label{thm:uniqueness}
Consider the FOCS governed by the one-dimensional FDE \eqref{eq:1DFDS} with the periodic conditions \eqref{eq:periodic1} and \eqref{eq:periodic3}. The following statements hold true:
\begin{enumerate}[label=(\roman*)]
\item The trivial solution is the unique $T$-periodic Carath\'{e}odory solution if $L \ge T, K Y > 1$ and $D(t) > \mu_{\max}$ for all $t \ge 0$.
\item There exists a unique non-trivial, $T$-periodic Carath\'{e}odory solution if any of the following two conditions hold: 
\begin{enumerate}[label=(\alph*)]
\item $s(0) \leq s^* = \displaystyle{\frac{s_{\text{in}} \sqrt{K Y}}{\sqrt{K Y} + 1}}$, and $D(t) \leq \nu(s^*)$ for all $t \in [0, T]$.
\item $s(0) \leq s^*$, and the average dilution rate satisfies $\bar{D} = \frac{1}{T} \int_0^T D(t) \, dt \leq \nu(s^*)$.
\end{enumerate}
\end{enumerate}
\end{theorem}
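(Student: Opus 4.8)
The plan is to handle the two parts separately, in both cases exploiting the fact that the sliding-memory derivative has zero mean over one period: for any $T$-periodic, absolutely continuous $s$ one has $\int_0^T {}^{\text{MC}}_{L}D_t^\alpha s(t)\,dt = 0$, since ${}^{\text{MC}}_{L}D_t^\alpha s(t)=\frac{1}{\Gamma(1-\alpha)}\int_0^L \sigma^{-\alpha}s'(t-\sigma)\,d\sigma$ and $s'$ is $T$-periodic with zero mean (apply Fubini and integrate the inner integral). Integrating the reduced equation \eqref{eq:1DFDS} over $[0,T]$ therefore yields the balance $\int_0^T [D(t)-\nu(s(t))](s_{\text{in}}-s(t))\,dt = 0$, a constraint that every $T$-periodic Carath\'{e}odory solution must satisfy. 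Existence of a solution is already supplied by Theorem \ref{thm:existence}, so the task reduces to identifying which solutions occur and proving uniqueness.

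For part (i), I would combine this balance with the sign structure of $f$. Under the hypotheses $KY>1$ and $D(t)>\mu_{\max}$, one has $\nu(s)\le\mu_{\max}<D(t)$ for every $s\in[0,s_{\text{in}}]$, so $f(t,s)=\vartheta^{1-\alpha}[D(t)-\nu(s)](s_{\text{in}}-s)\ge 0$ on $[0,s_{\text{in}}]$, with strict inequality whenever $s<s_{\text{in}}$. Since the integrand in the balance is then nonnegative yet integrates to zero, it must vanish almost everywhere, which forces $s(t)\equiv s_{\text{in}}$; that is, the washout (trivial) solution is the only periodic one. The role of $L\ge T$ is to underwrite the companion energy argument obtained by multiplying \eqref{eq:1DFDS} by $w=s_1-s_2$ and integrating, whereby the strict monotonicity of $f$ from Lemma \ref{lem:monotonicity} clashes with the sign of the quadratic form $\int_0^T w\,{}^{\text{MC}}_{L}D_t^\alpha w\,dt$; ensuring that this form is sign-definite (the memory window spanning a full period is what makes this plausible) is the step I expect to demand the most care.

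For part (ii), I would first record the algebraic significance of $s^* = s_{\text{in}}\sqrt{KY}/(\sqrt{KY}+1)$: a direct computation gives $KY(s_{\text{in}}-s^*)+s^*=s_{\text{in}}\sqrt{KY}$, whence $\nu(s^*)=\mu_{\max}/(\sqrt{KY}+1)$ and $g(s^*)=\nu(s^*)-(s_{\text{in}}-s^*)\nu'(s^*)=0$, where $g$ is the strictly increasing function from the proof of Lemma \ref{lem:monotonicity}. Consequently $\partial f/\partial s = \vartheta^{1-\alpha}(g(s)-D(t))<0$ throughout $[0,s^*]$, because $g(s)\le g(s^*)=0<D_{\min}\le D(t)$, so $f$ is strictly decreasing in $s$ on $[0,s^*]$ with no lower bound on $D$ required. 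I would then introduce the compact convex set $X^*=\{s\in X: 0\le s\le s^*\}$ and show it is forward invariant: at the boundary $s=s^*$ the hypothesis $D(t)\le\nu(s^*)$ gives $f(t,s^*)\le 0$, while $f(t,0)>0$ as in Section \ref{subsec:WPFOCS}, so together with the continuity from the Carath\'{e}odory framework trajectories stay in $[0,s^*]$. Existence of a periodic solution in $X^*$ then follows by replaying Theorem \ref{thm:existence} on $X^*$, and uniqueness in $X^*$ follows from the energy identity with the strict monotonicity just established: for two solutions $s_1,s_2\in X^*$ the product $(s_1-s_2)[f(t,s_1)-f(t,s_2)]$ is strictly negative wherever $s_1\ne s_2$, contradicting the nonnegativity of $\int_0^T (s_1-s_2)\,{}^{\text{MC}}_{L}D_t^\alpha(s_1-s_2)\,dt$ unless $s_1\equiv s_2$. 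Non-triviality is immediate, since $s\le s^*<s_{\text{in}}$ gives $x=Y(s_{\text{in}}-s)>0$; and since $\nu$ is increasing, any non-trivial periodic orbit is forced into $X^*$ (on $(s^*,s_{\text{in}})$ one has $\nu(s)>\nu(s^*)\ge D(t)$, so $f<0$ and such values cannot persist periodically), making the $X^*$ solution the unique non-trivial one.

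The genuinely delicate case is the averaged hypothesis (b), where $D(t)$ may exceed $\nu(s^*)$ at some instants, so the pointwise boundary argument for invariance of $X^*$ and the ``forced into $X^*$'' step both break down. Here I would try to leverage the period-integral balance together with the convexity of $\nu$ (recall $\nu''>0$ when $KY>1$) to control $\max_t s(t)$ by a suitable average and thereby upgrade $\bar D\le\nu(s^*)$ to $\max_t s(t)\le s^*$; making this nonlocal comparison rigorous for the sliding-memory operator is where I anticipate the principal difficulty, alongside the sign-definiteness of the fractional energy form flagged in part (i).
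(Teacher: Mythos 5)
Your treatment of part (i) is correct and takes a genuinely different---and cleaner---route than the paper. The zero-mean identity $\int_0^T {}^{\text{MC}}_{L}D_t^\alpha s(t)\,dt = 0$ for $T$-periodic absolutely continuous $s$ is legitimate (Fubini plus $\int_0^T s'(t-\sigma)\,dt = 0$ by periodicity), and integrating \eqref{eq:1DFDS} over one period then forces the integrand $[D(t)-\nu(s(t))](s_{\text{in}}-s(t))$, which is nonnegative and strictly positive wherever $s<s_{\text{in}}$ under $D(t)>\mu_{\max}$, to vanish a.e., hence $s\equiv s_{\text{in}}$. The paper instead proves (i) by a pointwise extremum argument on the Volterra form \eqref{eq:Volterra1}: it evaluates at $t^*$ where $|s_1-s_2|$ is maximal, invokes the strict monotonicity of $f$ (Lemma \ref{lem:monotonicity}, which is where $KY>1$ and $D>\mu_{\max}$ enter) together with positivity of the kernel $(t^*-\tau)^{\alpha-1}$, and uses $L\ge T$ to promote a.e.\ equality on the memory window to global equality. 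Your argument shows that $KY>1$ and $L\ge T$ are artifacts of that technique, which is a genuine strengthening; your auxiliary ``energy'' remark in (i) is superfluous given your own first argument. In (ii)(a), your verification that $g(s^*)=0$ (equivalently the paper's $h'(s^*)=0$), the resulting strict decrease of $f$ on $[0,s^*]$, and the forward invariance of $[0,s^*]$ under $D(t)\le\nu(s^*)$ all coincide with the paper's steps.

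The first genuine gap is in your uniqueness step for (ii)(a): it rests entirely on the positive semi-definiteness of the fractional quadratic form $\int_0^T w\,{}^{\text{MC}}_{L}D_t^\alpha w\,dt \ge 0$ for $T$-periodic $w$, which you flag as the principal difficulty but never prove. This is not a routine fact: it amounts to showing that the Fourier symbol of the sliding-memory operator has nonnegative real part, i.e.\ $\omega_k \int_0^L \sigma^{-\alpha}\sin(\omega_k\sigma)\,d\sigma \ge 0$ for every harmonic $\omega_k = 2\pi k/T$, which in turn relies on the classical inequality $\int_0^X u^{-\alpha}\sin u\,du > 0$ for $X>0$. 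The claim is true, but without such a lemma your proof of (ii)(a) does not close. The paper avoids needing any spectral property of the operator: its contradiction at the extremum $t^*$ of $\delta = s_1-s_2$ in \eqref{eq:delta} uses only the elementary positivity of $(t^*-\tau)^{\alpha-1}$, the delay term $\delta(t^*-L+kT)$ being bounded by $\|\delta\|_\infty$, and the strict decrease of $f$ on $[0,s^*]$.

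The second gap is case (ii)(b), which you do not prove at all: you correctly identify that the pointwise invariance argument fails when $D(t)$ may exceed $\nu(s^*)$ on subintervals, but you offer only a plan (period-integral balance plus convexity of $\nu$) with no mechanism that actually converts $\bar D \le \nu(s^*)$ into $\max_t s(t)\le s^*$. Identifying the obstacle is not a proof. (For completeness, the paper's own treatment of (b) is also thin---it asserts invariance ``as in (a)'' from the average constraint---but judged on its own terms your proposal leaves both this case and the quadratic-form lemma open, so it is an incomplete proof of the theorem as stated.)
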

\begin{proof}
To prove uniqueness of solutions, assume there exist two distinct $T$-periodic Carath\'{e}odory solutions $s_1, s_2 \in X$. Define the difference $\delta(t) = s_1(t) - s_2(t): \| \delta \|_{\infty} = \sup_{t \in [0,T]} |\delta(t)| = M > 0$. Since $\delta(t)$ is absolutely continuous and $T$-periodic, there exists $t^* \in [0, T]: |\delta(t^*)| = M$ and 
\begin{equation}\label{eq:delta}
\delta(t^*) = \delta(t^* - L + kT) + \frac{1}{\Gamma(\alpha)} \int_{t^*-L}^{t^*} (t^* - \tau)^{\alpha-1} [f(\tau, s_1(\tau)) - f(\tau, s_2(\tau))] \, d\tau,
\end{equation}
where $k$ is the smallest integer such that $t^* - L + kT \in [0, T]$. 
\begin{enumerate}[label=(\roman*)]
\item We consider the following two cases:
\begin{description}
\item[Case 1] ($L > T: L \neq n T$, for some $n \in \mathbb{Z}^+$). Consider the following two subcases:
\begin{description}
\item[Subcase 1.1] ($\delta(t^*) = M > 0$). Here, $s_1(t^*) > s_2(t^*)$. Since $\delta(t^*)$ is at its maximum value $M$, the FD must satisfy $^{MC}_L D_t^\alpha \delta(t^*) \leq 0$. But from the original FDE \eqref{eq:1DFDS}, we have:
\[^{MC}_L D_t^\alpha \delta(t^*) = f(t^*, s_1(t^*)) - f(t^*, s_2(t^*)).\]
This shows that $f(t^*, s_1(t^*)) \le f(t^*, s_2(t^*))$, which implies that the integral in \eqref{eq:delta} must be non-positive. Moreover, by Lemma \ref{lem:monotonicity}, $f(t, s)$ is strictly decreasing in $s$, so $s_1(\tau) > s_2(\tau)$ implies $f(\tau, s_1(\tau)) < f(\tau, s_2(\tau))$. Define $A = \{ \tau \in [t^* - L, t^*] \mid s_1(\tau) > s_2(\tau) \}$ and $B = \{ \tau \in [t^* - L, t^*] \mid s_1(\tau) \leq s_2(\tau) \}$. The integral in \eqref{eq:delta} splits as:
\begin{align*}
\int_{t^*-L}^{t^*} (t^* - \tau)^{\alpha-1} [f(\tau, s_1(\tau)) - f(\tau, s_2(\tau))] \, d\tau &= \int_A (t^* - \tau)^{\alpha-1} [f(\tau, s_1(\tau)) - f(\tau, s_2(\tau))] \, d\tau\\
&+ \int_B (t^* - \tau)^{\alpha-1} [f(\tau, s_1(\tau)) - f(\tau, s_2(\tau))] \, d\tau.
\end{align*}
Since $(t^* - \tau)^{\alpha-1} > 0$ for $\tau \in [t^* - L, t^*)$ and $\alpha \in (0,1)$, the integrand is negative on $A$ and non-negative on $B$. If the integral is negative, then:
\[
M = \delta(t^*) < \delta(t^* - L + kT) \leq M,
\]
a contradiction. If the integral is zero, then $f(\tau, s_1(\tau)) = f(\tau, s_2(\tau))$ a.e., implying $s_1(\tau) = s_2(\tau)$ a.e. in $[t^* - L, t^*]$. Since $L > T$ (by assumption), the sliding memory window $[t-L,t]$ contains at least one complete period of any $T$-periodic solution, then by $T$-periodicity, $s_1 = s_2$ globally, contradicting distinctness.
\item[Subcase 1.2] ($\delta(t^*) = -M < 0$). We derive the proof by contradiction via a similar argument to that of \textbf{Subcase 1.1}. Notice here that $s_2(t^*) > s_1(t^*)$. Since $\delta(t^*)$ is at its minimum value $-M$, the FD must satisfy $^{MC}_L D_t^\alpha \delta(t^*) \geq 0$. But from the original FDE \eqref{eq:1DFDS}, we have:
\[^{MC}_L D_t^\alpha \delta(t^*) = f(t^*, s_1(t^*)) - f(t^*, s_2(t^*)).\]
This shows that $f(t^*, s_1(t^*)) \geq f(t^*, s_2(t^*))$, which implies that the integral in \eqref{eq:delta} must be non-negative. Moreover, by Lemma \ref{lem:monotonicity}, $f(t, s)$ is strictly decreasing in $s$, so $s_2(\tau) > s_1(\tau)$ implies $f(\tau, s_1(\tau)) > f(\tau, s_2(\tau))$. Define $A' = \{ \tau \in [t^* - L, t^*] \mid s_2(\tau) > s_1(\tau) \}$ and $B' = \{ \tau \in [t^* - L, t^*] \mid s_1(\tau) \geq s_2(\tau) \}$. The integral in \eqref{eq:delta} splits as:
\begin{align*}
\int_{t^*-L}^{t^*} (t^* - \tau)^{\alpha-1} [f(\tau, s_1(\tau)) - f(\tau, s_2(\tau))] \, d\tau &= \int_{A'} (t^* - \tau)^{\alpha-1} [f(\tau, s_1(\tau)) - f(\tau, s_2(\tau))] \, d\tau\\
&+ \int_{B'} (t^* - \tau)^{\alpha-1} [f(\tau, s_1(\tau)) - f(\tau, s_2(\tau))] \, d\tau.
\end{align*}
Since $(t^* - \tau)^{\alpha-1} > 0$ for $\tau \in [t^* - L, t^*)$ and $\alpha \in (0,1)$, the integrand is positive on $A'$ and non-positive on $B'$. If the integral is positive, then:
\[
-M = \delta(t^*) > \delta(t^* - L + kT) \geq -M,
\]
a contradiction. If the integral is zero, then $f(\tau, s_1(\tau)) = f(\tau, s_2(\tau))$ a.e., implying $s_1(\tau) = s_2(\tau)$ a.e. in $[t^* - L, t^*]$. Since the sliding memory window $[t-L,t]$ contains at least one complete period of any $T$-periodic solution, then by $T$-periodicity, $s_1 = s_2$ globally, contradicting distinctness.
\end{description}
Hence, no nonzero $\delta(t)$ can exist, proving the uniqueness of the $T$-periodic Carath\'{e}odory solution.
\item[Case 2] ($L = n T$, for some $n \in \mathbb{Z}^+$). The condition $L = n T$ ensures that the sliding memory window aligns perfectly with the periodic structure. Since $\delta(t)$ is $T$-periodic, we have $\delta(t - L + k T) = \delta(t)$. This simplifies the integral form of the FDE:
\begin{align*}
\delta(t) &= \delta(t - L + k T) + \frac{1}{\Gamma(\alpha)} \int_{t-L}^{t} (t - \tau)^{\alpha-1} [f(\tau, s_1(\tau)) - f(\tau, s_2(\tau))] \, d\tau\\
\Rightarrow 0 &= \frac{1}{\Gamma(\alpha)} \int_{t-n T}^{t} (t - \tau)^{\alpha-1} [f(\tau, s_1(\tau)) - f(\tau, s_2(\tau))] \, d\tau.
\end{align*}
Since $(t - \tau)^{\alpha-1} > 0$ for $\tau \in [t - n T, t), \alpha \in (0,1)$, and $f(t, s)$ is strictly decreasing in $s$, the integral being zero implies $f(\tau, s_1(\tau)) = f(\tau, s_2(\tau))$ a.e. in $[t - n T, t]$. Therefore, $s_1(\tau) = s_2(\tau)$ a.e. in $[t - n T, t]$. Since $s_1$ and $s_2$ are continuous (as Carath\'{e}odory solutions are continuous), then $s_1(\tau) = s_2(\tau)$ everywhere.
\end{description}
\item \begin{enumerate}[label=(\alph*)]
\item Assume $s(0) \leq s^*$, and $D(t) \leq \nu(s^*) = \displaystyle{\frac{\mu_{\max}}{1 + \sqrt{K Y}}}$ for all $t \in [0, T]$. First, we show that $s(t) \leq s^*$ for all $t$. Suppose there exists $t_1 > 0$ such that $s(t_1) > s^*$. Let $t_0 = \inf \{ t > 0 \mid s(t) > s^* \}$. Since $s(t)$ is continuous (by the Carath\'{e}odory property), $s(t_0) = s^*$. At $t_0$:
\[
{}_L^{MC} D_t^\alpha s(t_0) = \vartheta^{1-\alpha} [D(t_0) - \nu(s^*)](s_{\text{in}} - s^*) \leq 0,
\]
since $D(t_0) \leq \nu(s^*)$. For an absolutely continuous function, if ${}_L^{MC} D_t^\alpha s(t_0) \leq 0$ at a point where $s(t_0) = s^*$, the solution cannot increase beyond $s^*$ at $t_0$. Thus, $s(t) \leq s^*$ for all $t$. Since $s(0) \leq s^*$, we have $s(t) \in [0, s^*]$. Now, assume that the two distinct $T$-periodic Carath\'{e}odory solutions $s_1, s_2 \in X$ with $s_1(t), s_2(t) \in [0, s^*]$. 

\textbf{Case 1 ($\delta(t^*) = M > 0$).} Here $s_1(t^*) > s_2(t^*)$, and $^{MC}_L D_t^\alpha \delta(t^*) \leq 0$, so $f(t^*, s_1(t^*)) \leq f(t^*, s_2(t^*))$. Now, consider
\[
f(t, s_1) - f(t, s_2) = \vartheta^{1-\alpha} \left[-D(t) \delta(t) - [h(s_1(t)) - h(s_2(t))]\right],
\]
where $h(s) = \nu(s) (s_{\text{in}} - s)$. Since
\[
h'(s) = \mu_{\max} \frac{K Y (s_{\text{in}} - s)^2 - s^2}{[K Y (s_{\text{in}} - s) + s]^2} > 0,
\]
for $s \in [0, s^*)$, then $h(s)$ is strictly increasing for $s \in [0, s^*)$ with $h'(s^*) = 0$. For any $s_1, s_2 \in [0, s^*]$, having $s_1(t^*) > s_2(t^*)$, implies $h(s_1(t^*)) > h(s_2(t^*))$, so:
\[
f(t^*, s_1(t^*)) - f(t^*, s_2(t^*)) = \vartheta^{1-\alpha} \left[-D(t^*) M - [h(s_1(t^*)) - h(s_2(t^*))]\right] < 0,
\]
which implies that the integral in \eqref{eq:delta} is negative. If $0 < L \neq n T$, for any $n \in \mathbb{Z}^+$, then:
\[
M = \delta(t^*) < \delta(t^* - L + kT) \le M,
\]
a contradiction. Otherwise, if $L = n T$, then the integral in \eqref{eq:delta} vanishes, another contradiction.

\textbf{Case 2 ($\delta(t^*) = -M < 0$).} Here $s_2(t^*) > s_1(t^*)$, and $^{MC}_L D_t^\alpha \delta(t^*) \geq 0$, so $f(t^*, s_1(t^*)) \geq f(t^*, s_2(t^*))$. Since $h(s_1(t^*)) < h(s_2(t^*))$, we have:
\[
f(t^*, s_1(t^*)) - f(t^*, s_2(t^*)) = \vartheta^{1-\alpha} \left[-D(t^*)(-M) - [h(s_1(t^*)) - h(s_2(t^*))]\right] > 0,
\]
which implies that the integral in \eqref{eq:delta} is positive. If $0 < L \neq n T$, for any $n \in \mathbb{Z}^+$, then:
\[
-M = \delta(t^*) > \delta(t^* - L + kT) \ge -M,
\]
a contradiction. Otherwise, if $L = n T$, then the integral in \eqref{eq:delta} collapses, another contradiction.

Hence, no nonzero $\delta(t)$ can exist, proving the uniqueness of the $T$-periodic Carath\'{e}odory solution.

\item Assume $s(0) \leq s^*$, and $\displaystyle{\bar{D} = \frac{1}{T} \int_0^T D(t) \, dt \leq \nu(s^*)}$. As in (a), $s(t) \leq s^*$ for all $t$, since $D(t) \leq \displaystyle{\frac{\mu_{\max}}{1 + \sqrt{K Y}}} < \mu_{\max}$ in some intervals and the average constraint ensures the periodic solution aligns with an equilibrium $\bar{s} \leq s^*$. Suppose that the two distinct solutions $s_1, s_2 \in X$ with $s_1(t), s_2(t) \in [0, s^*]$. 

\textbf{Case 1 ($\delta(t^*) = M > 0$).} Then $^{MC}_L D_t^\alpha \delta(t^*) \leq 0$, so $f(t^*, s_1(t^*)) \leq f(t^*, s_2(t^*))$. Since $h(s_1(t^*)) > h(s_2(t^*))$, the contradiction arises as in (a). 

\textbf{Case 2 ($\delta(t^*) = -M < 0$).} Similarly, $f(t^*, s_1(t^*)) \geq f(t^*, s_2(t^*))$, leading to a contradiction as in (a). 

The average constraint ensures $\delta(t) = 0$ globally, proving the uniqueness of the $T$-periodic Carath\'{e}odory solution.
\end{enumerate}
\end{enumerate}
\end{proof}
Theorem \ref{thm:uniqueness} shows that the FDE \eqref{eq:1DFDS} with sliding memory, piecewise continuous dilution rate, and $T$-PBCs has a unique $T$-periodic Carath\'{e}odory solution when sliding memory length $L$, dilution rate $D$, the system parameters $K$ and $Y$, or the initial substrate concentration satisfy appropriate conditions. The existence and uniqueness results derived in this work validate the proposed mathematical model in Section \ref{sec:MD1} for optimizing periodic dilution rate strategies in bioprocesses for water treatment, ensuring that the model is well-posed and has a unique solution even with piecewise continuous dilution rate inputs when the parameters satisfy appropriate conditions, which is the realistic case in practical applications.

\begin{remark}
Theorem \ref{thm:uniqueness} provides a rigorous foundation for the uniqueness of non-trivial, periodic solutions to the reduced one-dimensional FOCS. The result is significant for ensuring the well-posedness of the model, particularly under PBCs and sliding memory effects. The theorem presents three distinct sets of sufficient conditions for uniqueness, covering a wide range of practical scenarios. The sufficient conditions accommodate cases where periodic memory lengths align with the system's cycle, and address other general cases of non-aligned memory windows under certain conditions on the system parameters $K$ and $Y$, the initial substrate concentration, and the dilution rate. The use of the Carath\'{e}odory framework is particularly suitable here, as it accommodates piecewise continuous dilution rates and ensures solution regularity even in the presence of discontinuities in the dilution rate.  The theorem is generally constructive in that it identifies practical design conditions (e.g., bounds on the dilution rate or memory length) that guarantee uniqueness. This is especially important for applications in bioreactor optimization, where dilution rate strategies must ensure predictable and stable system behavior.
\end{remark}

\begin{remark}
Condition (i) of Theorem \ref{thm:uniqueness} typically leads to the washout state, as no microbial growth can be sustained when the dilution rate exceeds the maximum growth rate. That is, the FOCS has the unique solution $(s(t), x(t)) = (s_{\text{in}}, 0)$ for all $t \ge 0$.
\end{remark}

\begin{remark}
Without $ K Y > 1 $ in Condition (i) of Theorem \ref{thm:uniqueness}, $f$ may not be strictly decreasing, allowing multiple periodic solutions, as the contradiction in the integral equation fails. Furthermore, the condition $ L \geq T $ is also mathematically crucial here, as it ensures the memory window covers a full period, allowing local equality to imply global equality directly. For $L < T$, extending local equality into global equality is flawed, as non-zero integrals at points where $ |\delta(t)| < \|\delta\|_{\infty} $ do not consistently produce contradictions, potentially permitting distinct solutions. Thus, uniqueness may not hold for $L <T $ under the given conditions. While $D(t) > \mu_{\max }$ strongly suggests the washout state is the only stable solution, our mathematical proof of uniqueness here requires strict monotonicity as well as $L \ge T$.
\end{remark}

\begin{remark}
It is noteworthy to mention that at a discontinuity point $t_d$ of $D(t)$, the right-hand side function $f$ experiences a jump. The solution $s$ remains continuous across $t_d$ but may have a corner. The FD $^{MC}_L D_t^\alpha s(t)$ is still well-defined at such points in the Carath\'{e}odory sense, as it is interpreted through the integral equation. The sliding memory window $[t-L,t]$ helps to smooth out the effects of discontinuities, as the CFDS considers the weighted average of past states. 
\end{remark}

Following the rigorous mathematical analysis of the reduced FOCS, encompassing the examination of trivial and non-trivial equilibria and the establishment of well-posedness, the next section summarizes the key contributions and implications of this research. Furthermore, it discusses potential avenues for future advancements in this field.

\section{Numerical Simulations}
\label{sec:numerical_simulations}
To validate the theoretical results presented in this paper, we conduct a series of numerical simulations designed to test the existence, uniqueness, positivity, boundedness, and stability of periodic solutions for the FOCS with sliding memory and PBCs. These simulations focus on the reduced one-dimensional FDE \eqref{eq:1DFDS} and the full two-dimensional system \eqref{eq:sysdyn1} and \eqref{eq:sysdyn2}, using the FG-PS method of \citet{elgindy2024fourierA,elgindy2024fourierB} for numerical discretization. The resulting nonlinear system of equations is solved using MATLAB's \texttt{fsolve} function. For smooth dilution rates, we collocate the substrate concentrations at equally spaced nodes given by $t_j = j T/N$, for $j = 0, \ldots, N-1$, with $N = 100$. The collocated solutions are then interpolated at equally spaced nodes $\hat{t}_j = j\Delta t$, for $j = 0, \ldots, M$, where $\Delta t = T/M$ and $M = 200$. For discontinuous dilution rates, we increase the collocation and interpolation mesh grid sizes into $N = 300$ and $M = 400$ to obtain more accurate solutions. Below, we present several numerical simulations, each targeting specific claims from Theorems \ref{thm:existence}, \ref{thm:POS1}, \ref{thm:uniqueness}, and Section \ref{subsec:NTEFODS}. 

Now, consider the FOCM parameter and dilution rate dataset: 
\[\mathcal{D} = \{\alpha = 0.8, L = 1.5, T = \vartheta = s_{\text{in}} = Y = K = 1, \mu_{\max} = 3.1, D(t) = 1 + 0.5 \sin(2\pi t/T)\,\forall t \ge 0\}.\] 
Since $D(t) \in [0.5, 1.5]$. Since $\bar{D} = 1$, the equilibrium substrate concentration is $\bar{s} \approx 0.323$.

Figure \ref{fig:Ex11} presents the numerical results of solving the FDE, highlighting the dynamic interplay between substrate, biomass, and dilution rate over a single period. The displayed results capture the periodic and memory-dependent behavior characteristic of the FOCS and provide insights into the stability and efficiency of continuous bioreactor operations under variable conditions. In particular, Figure \ref{fig:Ex11}(a) shows that the substrate concentration $s$ exhibits clear periodic behavior with period $T = 1$, oscillating around the equilibrium substrate concentration $\bar{s} = 0.323$, consistent with the average dilution rate $\bar{D} = 1$. The smooth oscillatory profile reflects the system's response to the periodic dilution rate $D$, depicted in Figure \ref{fig:Ex11}(b), which varies within $[0.5, 1.5]$. This periodicity confirms the effectiveness of the CFDS, ${}_L^{MC}D_t^\alpha$, in preserving the periodic nature of the solution---a key advantage over standard FDs that may not maintain periodicity for non-constant solutions. The phase relationship between $D(t)$ and $s(t)$ reflects the delayed microbial response due to memory effects, a feature captured by the fractional-order model. Figure \ref{fig:Ex11}(c) illustrates the corresponding biomass concentration $x$, computed via the relation~\eqref{eq:biomass}, which also exhibits $T$-periodic behavior and remains strictly positive, aligning with the theoretical positivity results established in Theorem \ref{thm:POS1}. The absolute residual error plot in Figure \ref{fig:Ex11}(d) demonstrates the numerical accuracy of the solution, with absolute residuals at collocation points remaining of order $10^{-14}$, indicating that the FG-PS method effectively captures the system's dynamics. The confinement of $s$ within $[0, s_{\text{in}}] = [0, 1]$ further corroborates the boundedness properties outlined in Corollary \ref{cor:BOS1}, which ensures that the solution trajectory remains within the biologically feasible invariant set $\Omega$. Since $0 < s(0) \approx 0.274 < s_{\text{in}} = 1$ and $D(t) < \mu_{\max} = 3.1$, this example also numerically verifies Theorem \ref{thm:existence}, which asserts the existence of at least one non-trivial, $T$-periodic Carath\'{e}odory solution for the FDE~\eqref{eq:1DFDS} with PBCs \eqref{eq:periodic1} and~\eqref{eq:periodic3}. Furthermore, the conditions $s(0) < s^* = 0.5$ and $D(t) < \nu(s^*) = 1.55$ numerically verifies Theorem \ref{thm:uniqueness}, which asserts the uniqueness of that non-trivial, $T$-periodic Carath\'{e}odory solution.

From a biological perspective, the periodic oscillations in substrate and biomass concentrations reflect the adaptive behavior of microbial populations in response to cyclic nutrient supply regulated by the dilution rate. The Contois growth model, embedded in the FOCS, captures the inhibitory effects of elevated biomass concentrations, making it suitable for modeling microbial processes in wastewater treatment. The positivity of $x(t)$ throughout the time interval indicates sustained microbial activity, which is essential for effective pollutant removal. The memory-dependent formulation, enabled by the CFDS, accounts for the cumulative influence of historical nutrient availability, aligning with the observation that microbial growth responds to recent environmental conditions. The periodic behavior around $\bar{s} \approx 0.323$ suggests a stable operational regime conducive to efficient pollutant degradation under periodic dilution rate.

\begin{figure}[H]
\centering
\includegraphics[scale=0.65]{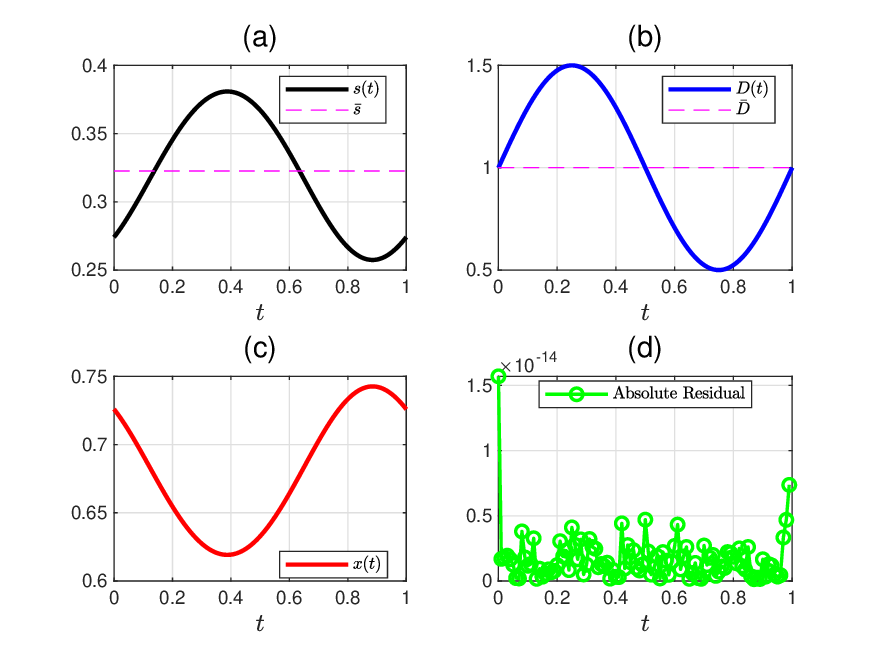}
\caption{Plots illustrating the results of numerically solving the chemostat FDE \eqref{eq:1DFDS} with Contois growth function. The panels display: (a) the approximate interpolated substrate concentration profile, (b) the dilution rate, (c) the corresponding approximate biomass concentration, and (d) the absolute residual error at collocation points, over one period $T$.
}
\label{fig:Ex11}
\end{figure}

Figure \ref{fig:Ex12} shows further the approximate substrate concentration profiles $s(t)$ over one period $T$ obtained by numerically solving the FOCM with Contois growth function for $100$ different random initial guesses, under the same configuration set $\mathcal{D}$. Notably, the resulting curves are either identical to the substrate concentration profile previously obtained in Figure \ref{fig:Ex11}, or are horizontal straight lines corresponding to the trivial solution $s(t) = s_{\text{in}}$, as proven by Theorem \ref{thm:uniqueness}. 

The numerical simulations demonstrate that, for a wide range of initial substrate concentrations, the solutions of the FOCS converge to a single, non-trivial, periodic substrate concentration profile. This convergence highlights the stability of the substrate dynamics, driven by the periodic dilution rate and the Contois growth function, which effectively captures microbial inhibition at high biomass levels. Such stability reflects the robust adaptability of the microbial system to cyclic nutrient inputs, ensuring consistent pollutant degradation in wastewater treatment applications. However, a small subset of solutions converges to the trivial solution, $s(t) = s_{\text{in}}$, corresponding to washout scenarios where maximum initial substrate concentrations result in zero biomass concentrations, preventing any pollutant degradation. These findings underscore the importance of selecting initial substrate concentrations that promote sustained microbial activity to prevent washout and optimize treatment efficiency.

\begin{figure}[H]
\centering
\includegraphics[scale=0.5]{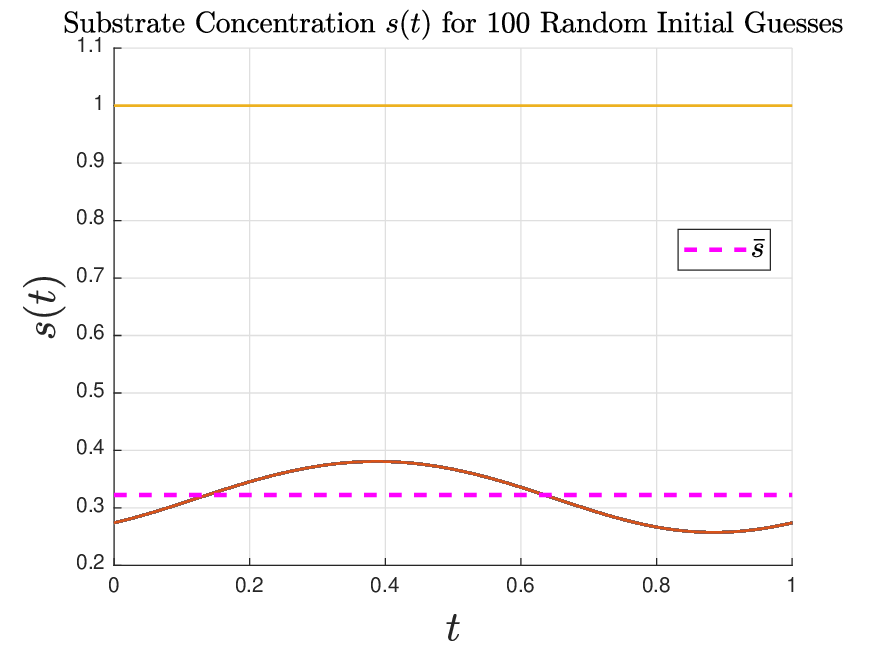}
\caption{Approximate substrate concentration profiles $s(t)$ over one period $T$ obtained by numerically solving the FOCM with Contois growth function for $100$ different random initial guesses. These initial guesses were generated by sampling each element independently and uniformly at random from the interval $[0, s_{\text{in}}]$. Each curve corresponds to a distinct initial substrate vector. The magenta dashed line represents the steady-state substrate concentration $\bar s$.}
\label{fig:Ex12}
\end{figure}

Figure~\ref{fig:Ex14} illustrates the time evolution of the substrate concentration $s(t)$ for various fractional orders $\alpha \in [0.1,\,1]$, with all simulations initialized at the steady-state value $\bar{s}$. The dashed black curve represents the classical integer-order case $\alpha = 1$, computed using MATLAB's boundary value problem solver \texttt{bvp4c}. For smaller values of $\alpha$, the trajectories exhibit higher oscillation amplitudes and slower convergence to the steady state, reflecting the strong memory effect inherent in the system. As $\alpha$ increases, the oscillations diminish, and the system stabilizes more rapidly. This behavior underscores the critical role of the fractional order $\alpha$ in shaping transient dynamics: lower values prolong memory effects and delay system adaptation, while higher values lead to faster relaxation. These insights are particularly relevant in the context of wastewater treatment, where microbial responses to nutrient fluctuations may be delayed due to adaptation mechanisms or metabolic lag, especially under variable influent conditions.

All simulated trajectories remain bounded and oscillate around the steady state, demonstrating both the numerical stability of the method and the physical plausibility of the model. As $\alpha$ approaches unity, the FDE converges to its classical integer-order counterpart, characterized by rapid stabilization of $s(t)$. However, such behavior may be less representative of real wastewater systems, where historical substrate levels influence microbial dynamics. Therefore, the FOCM provides a more realistic framework for capturing memory-dependent behaviors.

The bounded oscillations around $\bar{s} \approx 0.323$ ensure that substrate concentrations remain within biologically meaningful ranges, preventing pollutant accumulation. This intrinsic stability, as governed by the FDE, supports consistent water quality in continuous treatment processes. Additionally, the condition $x(t) > 0$ indicates sustained microbial activity throughout the simulation.

Biologically, the fractional order $\alpha$ serves as a tuning parameter that modulates memory-dependent substrate dynamics, offering a valuable tool for optimizing bioreactor performance. Lower values of $\alpha$ may be advantageous for systems treating complex or recalcitrant pollutants requiring prolonged microbial adaptation, while higher values may improve operational efficiency under stable influent conditions.

\begin{figure}[H]
\centering
\includegraphics[scale=0.5]{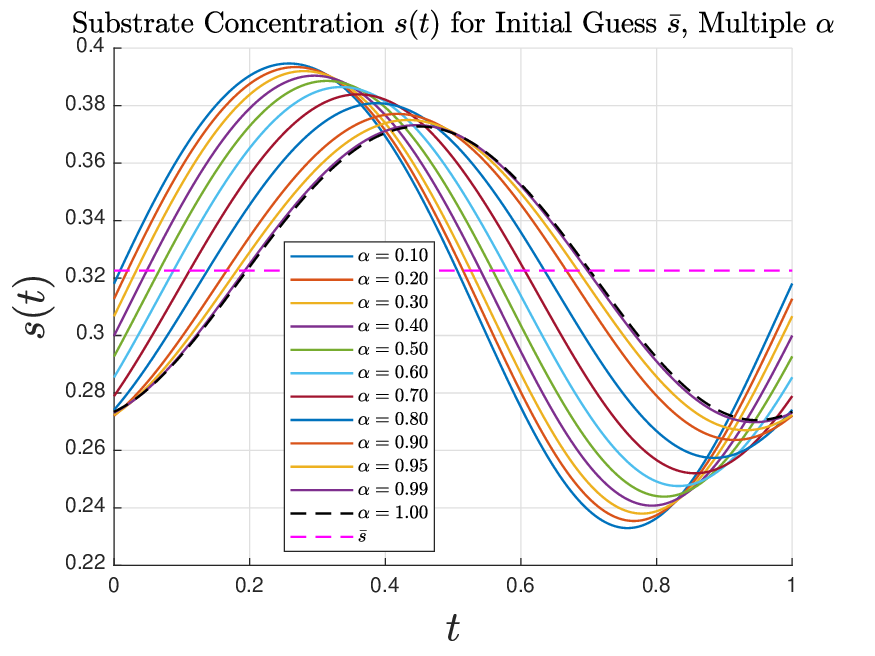}
\caption{Time evolution of substrate concentration $s(t)$ for various fractional orders $\alpha$ in the interval $[0.1,1]$, using the steady-state value $\bar{s}$ as the initial guess for all spatial nodes. Each colored solid curve represents a different fractional order $\alpha$. The dashed black line corresponds to the classical integer-order case $\alpha=1$. The dashed magenta line indicates the steady-state substrate concentration $\bar{s}$.}
\label{fig:Ex14}
\end{figure}

Figure \ref{fig:Ex15} illustrates the time evolution of the substrate concentration $s(t)$ under the configuration set $\mathcal{D}$, but for various memory lengths $L$, and using the steady-state value $\bar{s}$ as the initial guess at all spatial nodes. At the beginning of the simulation, the distinction between the trajectories is obvious---smaller values of $L$ display greater deviations, reflecting the stronger influence of recent history on the system's dynamics. As $L$ increases, the trajectories gradually converge, particularly for $L = 1,\,3,\,5$, indicating that the impact of the memory window saturates and the system's behavior becomes less sensitive to further increases in $L$.

Mathematically, this convergence arises because the CFDS increasingly incorporate a longer segment of the system's history into its present evolution and smooth out short-term fluctuations. Physically, this implies that, for large memory lengths, the system's dynamics are governed by extended historical behavior, resulting in more uniform trajectories. Biologically, it corresponds to a scenario where microorganisms in the chemostat respond not just to recent substrate concentrations, but to a longer history of environmental conditions, thereby producing more stable and predictable substrate dynamics.

This behavior emphasizes the importance of memory effects in the FOCM and their role in capturing the adaptive and history-dependent characteristics of biological systems. The greater variability in $s(t)$ observed for shorter memory lengths ($L = 0.1,\,0.3,\,0.5$) reflects the system’s sensitivity to recent changes in the dilution rate, which drives microbial consumption of pollutants. This has practical biological relevance in wastewater treatment systems subject to rapid influent fluctuations, where microbial activity is more immediately influenced by recent nutrient availability.

The convergence of substrate trajectories for larger $L$ is consistent with the numerical convergence depicted in the figure, suggesting that the FDE's unique periodic solution dominates as more historical data are integrated. This leads to stabilized microbial consumption, with $x(t)$ remaining strictly positive, indicating sustained microbial activity. Biologically, this captures treatment scenarios where pollutant degradation is influenced by a broader history of substrate availability, promoting more reliable and efficient outcomes. Such behavior is advantageous in municipal wastewater systems with relatively stable influent, where predictable microbial responses improve treatment performance. The bounded oscillations of $s(t)$ around $\bar{s}$ ensure pollutant concentrations remain controlled, and the condition $x(t) > 0$ affirms ongoing biomass growth. The finite memory window of the CFDS effectively models realistic microbial adaptation, unlike classical derivatives, which lack this historical sensitivity.

From an operational standpoint, tuning the memory length $L$ in the FDE offers a practical mechanism for optimizing bioreactor performance. Shorter values of $L$ can improve responsiveness under dynamic influent conditions, while longer values promote stability, supporting robust pollutant degradation in diverse wastewater treatment environments.

\begin{figure}[H]
\centering
\includegraphics[scale=0.5]{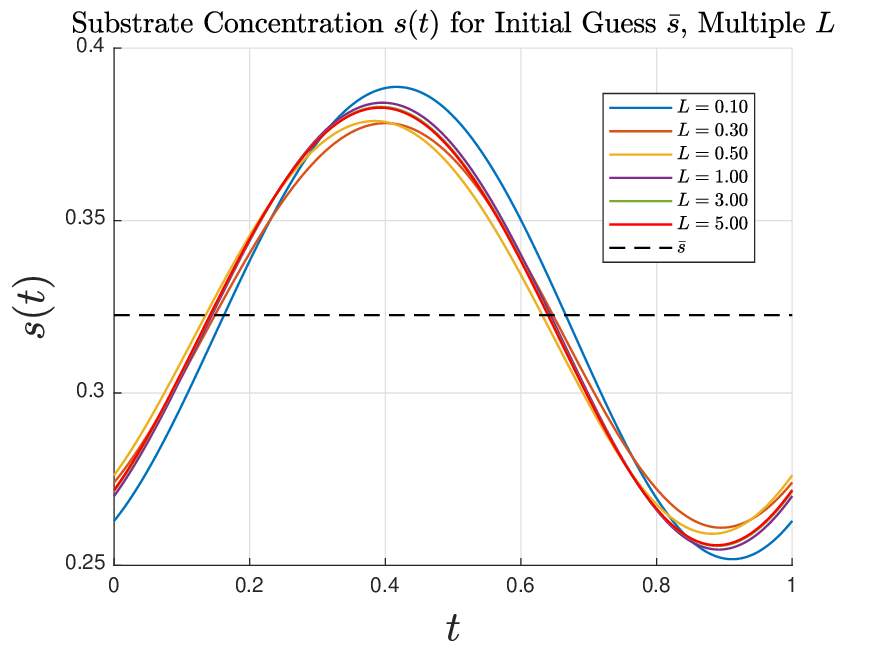}
\caption{Time evolution of substrate concentration $s(t)$ for various memory lengths $L \in \{0.1,\,0.3,\,0.5,\,1,\,3,\,5\}$, using the steady-state value $\bar{s}$ as the initial guess for all spatial nodes. Each colored solid curve represents a different value of $L$. The dashed black line indicates the steady-state substrate concentration $\bar{s}$.}
\label{fig:Ex15}
\end{figure}

Figure \ref{fig:Ex16} depicts the substrate concentration $s(t)$ using the initial guess $\bar{s}$ and the configuration set $\mathcal{D}$, but for multiple values of $\vartheta$. The figure illustrates the system's response under varying conditions. In particular, the trajectories exhibit a clear oscillatory pattern around the central value $\bar{s}$, with both the amplitude and phase influenced by the parameter $\vartheta$. As $\vartheta$ increases, the oscillations become larger, reflecting a stronger influence of memory effects on the system dynamics. However, for sufficiently large $\vartheta$, the curves converge to a unique periodic trajectory, indicating that the impact of memory effects saturates and the system's behavior becomes less sensitive to further increases in $\vartheta$. This behavior is consistent with the theoretical interpretation of $\vartheta$ as a characteristic time constant, which modulates the weighting of past states in the CFDS. Biologically, this implies that larger values of $\vartheta$ improve the sensitivity of microbial activity to historical substrate variations, potentially affecting the efficiency of pollutant degradation in the chemostat.

The figure highlights the critical role of $\vartheta$ in shaping both memory effects and system variability, offering valuable insights for optimizing periodic wastewater treatment processes. The increased amplitude of substrate oscillations for larger $\vartheta$ values indicates that the FDE amplifies the influence of historical substrate conditions on current dynamics. Despite these variations, the convergence to a unique periodic solution confirms that the substrate–dilution dynamics remain stable, with $x(t)$ consistently indicating robust microbial activity.

From a biological perspective, higher values of $\vartheta$ correspond to bioreactors with longer characteristic time scales (e.g., hydraulic retention times), where microbial consumption becomes more responsive to earlier substrate levels. This is particularly relevant in treating complex pollutants that require prolonged microbial exposure. The oscillatory behavior around $\bar{s} \approx 0.323$ ensures that substrate concentrations remain within biologically meaningful bounds, thereby preventing pollutant accumulation. The consistent convergence to a periodic solution, regardless of initial conditions, demonstrates the FDE's capacity to model stable substrate dynamics governed by the periodic dilution rate $D(t)$.

Practically, $\vartheta$ serves as a tunable parameter for controlling substrate dynamics in the FOCM. By adjusting $\vartheta$, one can regulate oscillatory behavior to optimize microbial degradation of pollutants under different wastewater treatment regimes.

\begin{figure}[H]
\centering
\includegraphics[scale=0.5]{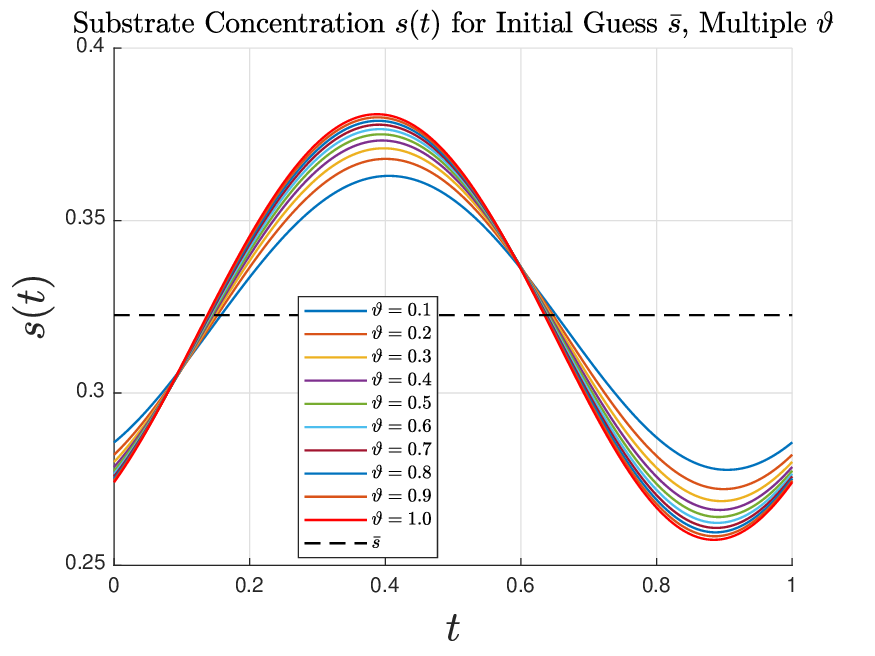}
\caption{Time evolution of substrate concentration $s(t)$ for various values of the characteristic time constant $\vartheta \in \{0.1,\,0.2,\,\ldots,\,1.0\}$, using the steady-state value $\bar{s}$ as the initial guess for all spatial nodes. Each colored solid curve represents a different value of $\vartheta$. The dashed black line indicates the steady-state substrate concentration $\bar{s}$.}
\label{fig:Ex16}
\end{figure}

Figure \ref{fig:Ex13} displays the substrate concentration profiles computed for 100 random initial guesses using the configuration dataset $\mathcal{D}$, but with $\mu_{\max} = 0.25$ and $K = 2$. Under this setting, Theorem \ref{thm:uniqueness}, Condition (i) guarantees that the trivial solution $s(t) = s_{\text{in}}$, which is the maximum substrate concentration with zero biomass concentration $x(t) = 0$, is the unique solution to the FOCM. This theoretical prediction is fully consistent with the figure, where all computed trajectories converge to the horizontal line $s(t) = s_{\text{in}}$, regardless of the initial guess. This is expected when no biomass is present to consume the substrate, preventing any pollutant degradation. The increased saturation constant $K = 2$ in the Contois model strengthens biomass inhibition in the FDE, which limits microbial consumption even at low substrate levels, contributing to washout. This highlights the importance of balancing system parameters to support microbial activity.

\begin{figure}[H]
\centering
\includegraphics[scale=0.5]{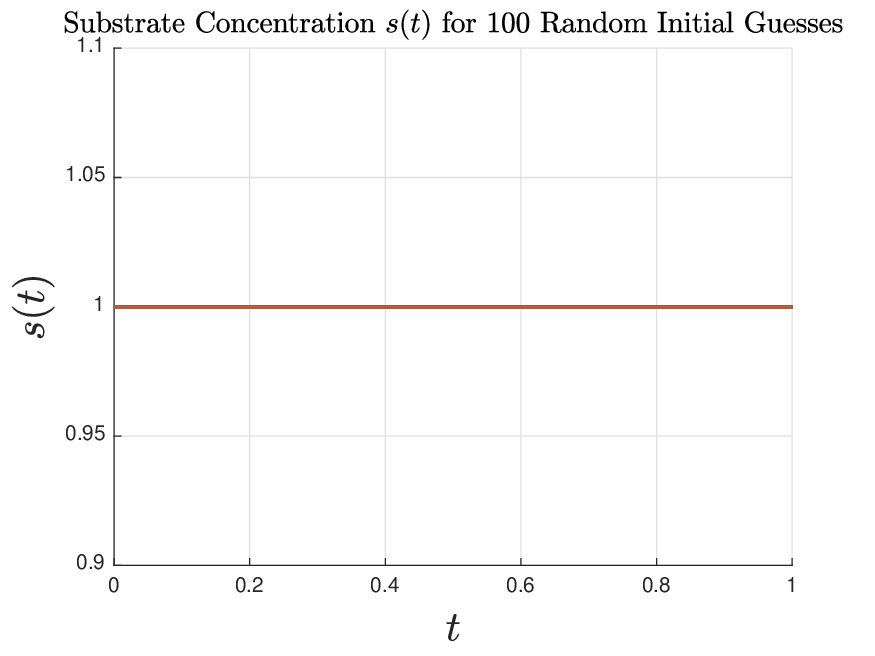}
\caption{Approximate substrate concentration profiles $s(t)$ over one period $T$ obtained by numerically solving the FOCM with Contois growth function, using $\mu_{\max} = 0.25$ and $K = 2$, for $100$ different random initial guesses. Each initial guess vector was generated by sampling each element independently and uniformly at random from the interval $[0, s_{\text{in}}]$. All curves are identical straight lines at $s(t) = s_{\text{in}}$, reflecting convergence from distinct initial substrates to the trivial solution.}
\label{fig:Ex13}
\end{figure}

Consider next the same configuration dataset $\mathcal{D}$ of the FOCM but with the bang-bang dilution rate: 
\begin{equation}\label{eq:DiscDil1}
D(t) = 
\begin{cases}
D_{\max}, & \text{if } t \in [0.25,\, 0.75), \\
D_{\min}, & \text{otherwise},
\end{cases}
\end{equation}
where $D_{\min} = 0.5$ and $D_{\max} = 1.5$, so that $D(t) \in [0.5, 1.5]$. Since $\bar{D} = 1$, the equilibrium substrate concentration remains $\bar{s} \approx 0.323$.

The numerical results presented in Figure \ref{fig:Ex17} demonstrate the robustness of the FG-PS method in solving the FOCS governed by the one-dimensional FDE with PBCs. As shown in Figure \ref{fig:Ex17}(a), the substrate concentration $s(t)$ exhibits a continuous, $T$-periodic oscillation around the equilibrium value $\bar{s} \approx 0.323$. This behavior is consistent with the theoretical predictions of Theorem \ref{thm:existence}, which guarantees the existence of a non-trivial, $T$-periodic Carath\'{e}odory solution. The absolute residuals, plotted in Figure~\ref{fig:Ex17}(d), are on the order of $10^{-14}$, confirming the spectral convergence of the FG-PS method. The bang-bang dilution rate $D(t)$, depicted in Figure \ref{fig:Ex17}(b), alternates sharply between $D_{\min} = 0.5$ and $D_{\max} = 1.5$ within each period. These discontinuities introduce corners in the substrate trajectory $s(t)$, which nevertheless remains continuous in accordance with the Carath\'{e}odory framework. The sliding memory window $[t - L, t]$, which integrates the system's past states, plays a critical role in smoothing these corners and maintaining the continuity of $s(t)$.

The initial condition $s(0) \approx 0.272 < s^* = 0.5$ and the bound $D(t) < \nu(s^*) = 1.55$ together support the existence of a unique and stable non-trivial periodic solution, as established by Theorem \ref{thm:uniqueness}. The application of the CFDS scheme ensures the preservation of periodicity and provides advantages over classical derivatives in terms of mathematical formulation.

The $T$-periodic oscillation of the substrate concentration around $\bar{s} \approx 0.323$, driven by the bang-bang dilution rate $D(t)$, reflects the microbial system's adaptability to abrupt fluctuations in nutrient input. Biologically, this illustrates how microorganisms can maintain functional stability in environments governed by on-off nutrient dosing strategies. The bang-bang dilution rate mimics practical bioreactor control mechanisms, where nutrient input is discretely toggled rather than continuously varied.

Importantly, the continuity of the substrate profile despite discontinuities in $D(t)$ demonstrates the smoothing effect of the sliding memory window in the FDE, which stabilizes substrate dynamics and supports consistent microbial consumption. From a treatment perspective, the stable periodic solution under bang-bang dilution rate validates the efficacy of on-off feeding strategies in optimizing pollutant degradation. The FDE's focus on substrate-dilution dynamics, with the microbial population $x(t)$ derived post hoc, provides a robust and flexible modeling framework for the design and analysis of efficient wastewater treatment systems.

\begin{figure}[H]
\centering
\includegraphics[scale=0.65]{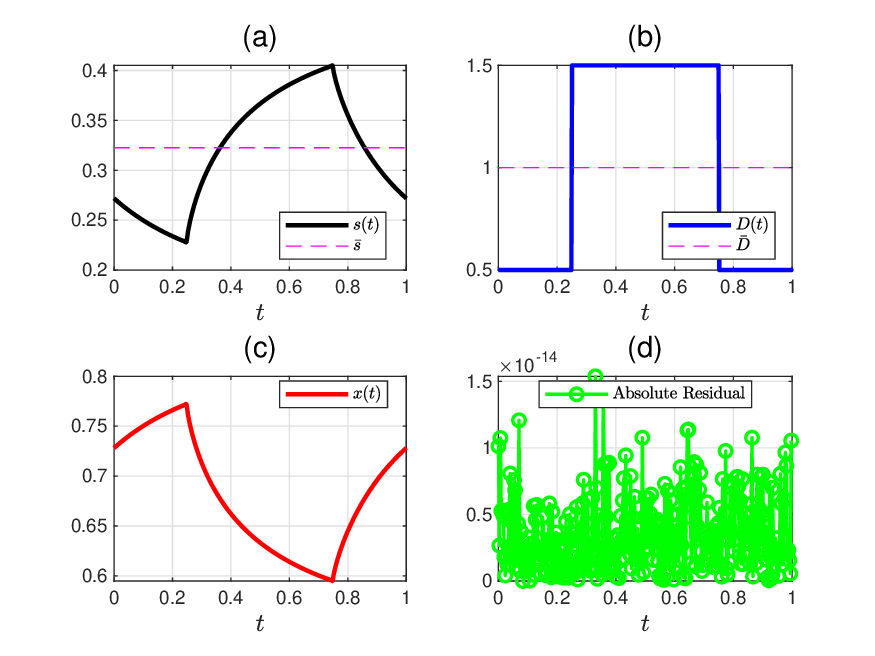}
\caption{Simulation results of the FOCM with Contois kinetics using the FG-PS method using the bang-bang dilution rate \eqref{eq:DiscDil1} and under the remaining original parameter dataset: (a) Substrate concentration $s(t)$ vs. steady state $\bar{s}$, (b) Bang-bang dilution rate $D(t)$ vs. average $\bar{D}$, (c) Biomass concentration $x(t)$, (d) Absolute residuals at collocation nodes.}
\label{fig:Ex17}
\end{figure}

\section{Conclusion}
\label{sec:Conc}
In this work, we developed a rigorous mathematical framework for modeling periodic biological water treatment using a novel FOCS. The model incorporates the CFDS and PBCs, capturing essential memory effects and time-periodic behaviors often overlooked in traditional integer-order models. Using the Contois growth function, we reduced the original two-dimensional FDE system to a one-dimensional FDE, enabling detailed mathematical analysis. We proved the well-posedness of the model by establishing the existence, uniqueness, positivity, and boundedness of non-trivial periodic Carath\'{e}odory solutions. These results provide a mathematically sound basis for understanding the dynamic behavior of microbial systems in periodically operated chemostats under realistic biological constraints. Furthermore, we rigorously characterized the steady states under constant dilution rates and derived precise conditions for the existence and uniqueness of non-trivial periodic solutions. 

This study is the first to combine the CFDS, Contois kinetics, and periodic operation in a chemostat setting with formal mathematical guarantees. The developed theory demonstrates that sliding memory effects and periodic forcing can be harnessed to produce stable, biologically feasible operating regimes, which is critical for the design of advanced water treatment systems. 

Looking forward, this framework opens promising directions for future research, such as the inclusion of time delays, stochastic disturbances, or optimal periodic dilution rate strategies tailored to sustainability goals. The results contribute to bridging the gap between mathematical theory and practical engineering challenges in environmental biotechnology, particularly in achieving Sustainable Development Goal 6 (Clean Water and Sanitation).

\section*{Declarations}
This research aligns with global sustainability goals, particularly SDG 6 (Clean Water and Sanitation), by advancing mathematical tools for efficient and scalable water treatment solutions. The theoretical nature of this work paves the way for future experimental validation and real-world implementation.

\subsection*{Competing Interests}
The author declares that they have no competing interests.

\subsection*{Availability of Supporting Data}
Not applicable. This study is theoretical in nature and does not involve experimental data or datasets. All mathematical derivations and proofs are contained within the manuscript.

\subsection*{Ethical Approval and Consent}
Not applicable.

\subsection*{Funding}
This research received no specific grant from any funding agency in the public, commercial, or not-for-profit sectors.


\bibliographystyle{plainnat}
\bibliography{Bib}

\end{document}